\documentclass{amsart}
\usepackage{amsmath,amsthm,amssymb}
\usepackage{graphicx}
\usepackage{hyperref} 
\hypersetup{
	setpagesize=false,
	bookmarksnumbered=true,%
	bookmarksopen=true,%
	colorlinks=true,%
	linkcolor=magenta,
	citecolor=blue,
}
%
\theoremstyle{plain}
\newtheorem{thm}{Theorem}[section]
\newtheorem{lem}[thm]{Lemma}
\newtheorem{prop}[thm]{Proposition}
\newtheorem{cor}[thm]{Corollary}
\newtheorem{conj}[thm]{Conjecture}
\theoremstyle{definition}
\newtheorem{defi}[thm]{Definition}
\newtheorem{exm}[thm]{Example}
\newtheorem{rem}[thm]{Remark}
\newtheorem{qst}[thm]{Question}
\newcommand{\ZZ}{\mathbb{Z}}
\newcommand{\CC}{\mathbb{C}}
\newcommand{\calD}{\mathcal{D}}
\newcommand{\Int}{\mathrm{Int\,}}
%

\title[Exotic 4-manifolds with small trisection genus]{Exotic 4-manifolds with small trisection genus}
\author[Natsuya Takahashi]{Natsuya Takahashi}
\date{December 7, 2024.}
\subjclass[2020]{Primary~57K40, Secondary~57R55, 57R65}
\keywords{4-manifolds; trisections; corks; exotic smooth structures}
\address{Department of Pure and Applied Mathematics, Graduate School of Information Science and Technology, Osaka University, 1-5 Yamadaoka, Suita, Osaka 565-0871, Japan}
\email{nt-takahashi@ist.osaka-u.ac.jp}
\begin{document}
\begin{abstract}
We show that there exists an exotic pair of $4$-manifolds with boundary whose trisection genera are $4$. We also construct genus-$3$ relative trisections for an infinite family of contractible $4$-manifolds introduced by Akbulut and Kirby.
\end{abstract}
\maketitle
\section{Introduction}\label{sec:intro}

 The theory of \textit{trisections} for smooth $4$-manifolds was introduced by Gay and Kirby~\cite{GayKir16} as a $4$-dimensional analogue of Heegaard splittings for $3$-manifolds.
 Roughly speaking, a trisection is a decomposition of a smooth $4$-manifold into three $4$-dimensional $1$-handlebodies which intersect along a surface.
 The main focus of this paper is on trisections for compact $4$-manifolds with non-empty connected boundary, called \textit{relative trisections}. 
 (For foundations of relative trisections, see e.g., \cite{GayKir16}, \cite{Cas16}, \cite{CasGayPin18_1}, \cite{CasGayPin18_2}, \cite{CasOzb19}, \cite{KimMil20}.)
 Gay and Kirby also showed that any compact, connected, oriented, smooth $4$-manifold admits a (relative) trisection, and smooth structures of $4$-manifolds can be encoded by trisection diagrams.

 \textit{Trisection genus} is a natural $4$-dimensional analogue of Heegaard genus for $3$-manifolds.
  For a smooth $4$-manifold $X$, the trisection genus $g(X)$ is defined as the minimal integer $g$ such that $X$ admits a (relative) trisection with the triple intersection surface of genus $g$.
 An important fact in $3$-dimensional topology is that Heegaard genus is additive under connected sum, which follows from Haken's lemma~\cite{Hak68}.
 As a $4$-dimensional analogue of this result, Lambert-Cole and Meier~\cite{LamMei20} conjectured that trisection genus is additive under connected sum, that is, for any two $4$-manifolds $X$ and $Y$, we have $g(X\# Y)=g(X)+g(Y)$.
 (While they focused on closed $4$-manifolds, we can also consider the relative version: trisection genus for $4$-manifolds with boundary is additive under boundary connected sum.)
 It is worth noting that if the above conjectures hold, then trisection genus is a homeomorphism invariant, i.e., the following conjecture is true (for a detailed discussion, see Remark~\ref{rem:addreltg}):

\begin{conj}\label{conj:exotic-tg}
 If two smooth $4$-manifolds $X$ and $Y$ are exotic, then they satisfy $g(X)=g(Y)$.
\end{conj}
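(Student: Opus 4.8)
The plan is to derive the conjecture from the assumed additivity of trisection genus by combining it with a stabilization theorem. The point is that being \emph{exotic} includes being homeomorphic, and $g$ is by construction a diffeomorphism invariant; so it suffices to produce a diffeomorphism after performing operations whose effect on $g$ is controlled by additivity. Concretely, I would look for a fixed closed $4$-manifold $W$ of known trisection genus such that, after connect-summing both $X$ and $Y$ with enough copies of $W$, one obtains genuinely diffeomorphic manifolds; additivity then lets me subtract off the contribution of the $W$-summands and recover $g(X)=g(Y)$.

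First I would invoke Wall's stabilization theorem in the simply-connected closed case: if $X$ and $Y$ are homeomorphic (hence homotopy equivalent), then there is an integer $n\ge 0$ with
\[
X \# n(S^2\times S^2) \;\cong\; Y \# n(S^2\times S^2),
\]
a diffeomorphism. Taking $W=S^2\times S^2$ and using the standard computation $g(S^2\times S^2)=2$, the additivity conjecture yields
\[
g(X)+2n = g\big(X \# n(S^2\times S^2)\big) = g\big(Y \# n(S^2\times S^2)\big) = g(Y)+2n,
\]
and cancelling $2n$ gives $g(X)=g(Y)$. For the manifolds-with-boundary case that is the focus of this paper, I would run the identical argument: homeomorphic compact $4$-manifolds become diffeomorphic after interior connected sum with sufficiently many copies of $S^2\times S^2$ (which leaves the boundary unchanged), and the relative additivity conjecture again cancels the $2n$. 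If the spin type forces it, one may instead stabilize with $\mathbb{CP}^2\#\overline{\mathbb{CP}^2}$, whose trisection genus is also $2$, with no change to the conclusion.

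I expect the main obstacle to be the stabilization input rather than the final bookkeeping. Wall's theorem is clean for simply-connected closed $4$-manifolds, but the conjecture is stated for all smooth $4$-manifolds, so in general I would need the appropriate relative and non-simply-connected versions of stable diffeomorphism, together with the guarantee that the stabilizing summand can be taken \emph{closed} (so that the additivity hypothesis literally applies) and of trisection genus exactly $2$. Of course the deepest ingredient is the additivity conjecture itself, which is assumed throughout; the argument above shows only that homeomorphism invariance of $g$ is a formal consequence of additivity together with stabilization, not that either of these is established.
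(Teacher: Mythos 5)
Your proposal is correct and takes essentially the same route as the paper's own conditional argument (Remark~\ref{rem:addreltg} and \cite[Proposition~1.7]{LamMei20}): stabilization by $S^2\times S^2$ summands plus the assumed additivity, with the relative and non-simply-connected stabilization input you flagged as the main obstacle supplied by Gompf's generalization of Wall's theorem \cite{Gom84}, and the interior sum converted to a boundary connected sum via $M\# N\cong M\natural(N-\Int{D^4})$ so that relative additivity literally applies. The only cosmetic difference is that the paper never needs $g(S^2\times S^2)=2$, since the genus of the stabilizing summand cancels formally from both sides.
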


%
%
%

Here, two smooth manifolds are called \textit{exotic} if they are homeomorphic but not diffeomorphic.
In \cite{LamMei20}, Lambert-Cole and Meier also mentioned that if Conjecture~\ref{conj:exotic-tg} is true, then there are no exotic smooth structures on the closed $4$-manifolds with trisection genus at most $2$ (e.g., $S^4$, $\CC P^2$, $S^1\times S^3$, $S^2\times S^2$, $\CC P^2\# \CC P^2$, and $\CC P^2\#\overline{\CC P^2}$ ; see \cite{MeiZup17_1}).
%
%

 The main purpose of this paper is to provide supporting evidence for Conjecture~\ref{conj:exotic-tg}.
 It was shown in \cite{MeiZup18} and \cite{LamMei20} that there exist infinitely many exotic pairs of closed $4$-manifolds with the same trisection genus.
 In the relative case, previously, there had been no known examples of exotic $4$-manifolds with non-spherical boundary whose trisection genera are determined.
 As far as the author knows, the smallest trisection genus of the known exotic pairs satisfying the condition of Conjecture~\ref{conj:exotic-tg} is $23$.
 (By the work of Spreer and Tillmann~\cite{SprTil18}, one can see that $K3 \# \overline{\CC P^2}$ and $3\CC P^2 \# 20\overline{\CC P^2}$ have the trisection genus $23$.)
 The following result gives an exotic pair with a much smaller trisection genus:

\begin{thm}\label{main:g=4exotic}
 There exists an exotic pair $(P_1, Q_1)$ of $4$-manifolds with boundary such that $g(P_1)=g(Q_1)=4$.
\end{thm}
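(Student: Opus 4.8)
The plan is to realize $(P_1,Q_1)$ as a \emph{cork pair}. I would start from one of the Akbulut--Kirby contractible $4$-manifolds $C$ for which a genus-$3$ relative trisection has already been constructed, exhibit a cork involution $\tau$ on $\partial C$, and form $P_1 = C\cup_{K} h$ and $Q_1 = C\cup_{\tau(K)} h$ by attaching a single $2$-handle $h$ along a framed knot $K\subset \partial C$ and along its image $\tau(K)$ with the framing transported by $\tau$. With this setup the theorem splits into two tasks that I would attack separately: proving that $P_1$ and $Q_1$ are exotic, and proving that each has trisection genus exactly $4$.

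For the topological half I would use that the cork involution $\tau$ extends to a self-homeomorphism $\hat\tau$ of $C$. Since $\hat\tau|_{\partial C}=\tau$ carries the framed attaching region of $K$ to that of $\tau(K)$, it induces a homeomorphism $P_1\to Q_1$, and it simultaneously identifies $\partial P_1$ with $\partial Q_1$; as a consistency check, both manifolds are simply connected with $b_1=0$, $b_2=1$, and the same boundary, so a relative form of Freedman's theorem also yields the homeomorphism. For the smooth half I would invoke the standard cork obstruction: because $\tau$ does not extend to a diffeomorphism of $C$, a diffeomorphism $P_1\to Q_1$ cannot exist, the obstruction being detected after embedding into a closed $4$-manifold where a Seiberg--Witten--type invariant separates the two cork fillings, exactly as in the Akbulut--Yasui cork constructions. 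I treat this as established machinery rather than something to reprove.

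The trisection-genus upper bound is constructive. Beginning from the genus-$3$ relative trisection of $C$, I would isotope $K$ into a page of the induced open book on $\partial C$ and attach $h$ compatibly with the trisection, which modifies the open book and raises the genus of the central surface by exactly one. This produces explicit genus-$4$ relative trisections of both $P_1$ and $Q_1$—identical except for the location of the attaching curve—so that $g(P_1),g(Q_1)\le 4$.

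The main obstacle is the matching lower bound $g(P_1),g(Q_1)\ge 4$. Here I would combine the Euler-characteristic formula for relative trisections, which relates $g$, the handlebody parameters $k_i$, and the page data $(p,b)$ of the induced boundary open book, with the constraint that any relative trisection of $P_1$ must induce such an open book on $\partial P_1$. Feeding in $\chi(P_1)=2$, $b_1=0$, and the page parameters forced by $\partial P_1$, one solves for the admissible central-surface genera and rules out $g\le 3$. If the numerical inequality alone is not sharp, I would supplement it with the classification of low-genus relative trisections to show that no genus-$\le 3$ relative trisection can have $\partial P_1$ as its boundary, using that $C$ is not the standard $B^4$ and already requires genus $3$. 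Making this lower bound rigorous—rather than the upper bound or the (standard) exoticness—is where I expect the real work to lie.
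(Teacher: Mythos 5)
Your division of labor (exoticness cited to the cork literature; an explicit genus-$4$ upper bound; a numerical lower bound) matches the paper's outline, and treating the exoticness of the Akbulut--Yasui pair $(P_1,Q_1)$ as known is exactly what the paper does. But both of your substantive steps have genuine gaps. For the upper bound, everything hinges on isotoping $K$ (and, separately, $\tau(K)$) into a page of the open book induced by the genus-$3$ trisection of the cork, with framing compatible with the page framing --- the pages here are $4$-punctured spheres, and you give no argument that the attaching circle of the extra $2$-handle, with its prescribed framing, sits on such a page; nor do you justify the claim that a trisection-compatible $2$-handle attachment raises the central genus by exactly one. The paper avoids this entirely: it writes down two explicit $(4,3;0,4)$-diagrams $\calD_{P_1}$ and $\calD_{Q_1}$, verifies the three pairwise standardness conditions by hand, and runs the Kim--Miller algorithm plus Kirby calculus to identify the resulting manifolds with $P_1$ and $Q_1$. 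Tellingly, the paper's two diagrams arise from two \emph{distinct} genus-$3$ trisection diagrams of the Akbulut cork (Remark~\ref{rem:2Acork}), which suggests that your plan of a single trisection of $C$ accommodating both $K$ and $\tau(K)$ in pages is not how the construction actually goes through.

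The lower bound is the larger gap. The Euler-characteristic formula $\chi(X) = g + 3p + 2b - 3k - 1$ together with $\chi(P_1)=2$ and the parameter inequalities does \emph{not} rule out $g=3$: for instance $(g,k;p,b)=(3,2;0,3)$ satisfies all the numerical constraints. The missing idea --- the actual content of the paper's lower bound --- is geometric: one shows $\partial{P_1}\cong\partial{Q_1}\cong\partial{W^-(0,1)}$ by a slam-dunk, invokes Yamada's classification of exceptional surgeries along the Mazur link \cite[Theorem~1.1]{Yam18} to conclude this boundary is hyperbolic, and then applies \cite[Corollary~4.2]{Tak22a} (if $\partial{X}$ is hyperbolic then $g(X)\geq\chi(X)+2$), which rests on the fact that a hyperbolic $3$-manifold admits no planar open book with at most $3$ binding components (such manifolds are Seifert fibered or connected sums of lens spaces). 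Your proposed fallbacks do not patch this: there is no classification of low-genus \emph{relative} trisections (the Meier--Zupan genus-$2$ classification is for closed $4$-manifolds), and $g(C)=3$ implies nothing about $g(C\cup h)$, since trisection genus is not known to be monotone under handle attachment. You correctly predicted that the lower bound is where the work lies, but the hyperbolicity-of-the-boundary input --- the one idea that closes it --- is absent from your proposal.
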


\begin{figure}[!h]
	\begin{tabular}{cc}
		\begin{minipage}[t]{0.45\hsize}
		\centering
		\includegraphics[scale=0.9]{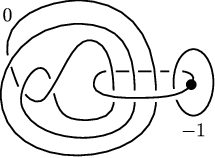}
		\caption{$P_{1}$}
		\label{fig:Kd-P1}
		\end{minipage} &
		\begin{minipage}[t]{0.45\hsize}
		\centering
		\includegraphics[scale=0.9]{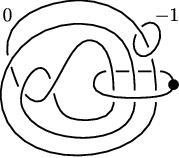}
		\caption{$Q_{1}$}
		\label{fig:Kd-Q1}
		\end{minipage}
	\end{tabular}
\end{figure}

 We prove this theorem by constructing explicit minimal genus relative trisection diagrams for the exotic pair $(P_1,Q_1)$ given by the handlebody diagrams of Figures~\ref{fig:Kd-P1} and \ref{fig:Kd-Q1}.
 It is well-known that $P_1$ and $Q_1$ are related by twisting along the \textit{Akbulut cork} (which is diffeomorphic to $W^-(0,0)$ given in Figure~\ref{fig:Kd-W-}).
 Let us recall that a \textit{cork} is a contractible $4$-manifold with an involution on the boundary, and is useful for constructing exotic smooth structures on $4$-manifolds. (For details, see Subsection~\ref{subsec:corks}.)
 In fact, our relative trisections of the exotic pair $(P_1, Q_1)$ are based on two distinct relative trisection diagrams of the Akbulut cork (see Remark~\ref{rem:2Acork}).

 Theorem~\ref{main:g=4exotic} is obtained as a byproduct of our construction of relative trisections for an infinite family of contractible $4$-manifolds.
 For integers $l$ and $k$, let $W^\pm(l,k)$ be the $4$-manifolds given by Figures~\ref{fig:Kd-W-} and \ref{fig:Kd-W+}. 
 The family $\{W^\pm(l,k)\}_{l,k\in\ZZ}$ was introduced by Akbulut and Kirby~\cite{AkbKir79} as examples of Mazur-type $4$-manifolds (i.e., contractible $4$-manifolds that admit a handle decomposition consisting of a single $0$-, $1$-, and $2$-handle).
 We note that $W^-(0,0)$ is diffeomorphic to the Akbulut cork.
 Our second result is the following:

\begin{thm}\label{main:AKMazur}
 For any integers $l$ and $k$, each of the Mazur-type $4$-manifolds $W^\pm(l,k)$ admits a genus-$3$ relative trisection.
 Moreover, if $l+k\notin\{2,3,4,5\}$, then the trisection genus of $W^-(l,k)$ is $3$.
\end{thm}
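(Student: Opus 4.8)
The plan is to establish the two assertions by independent arguments: an explicit construction gives the upper bound $g(W^\pm(l,k))\le 3$ for all $l,k$, while a boundary obstruction gives the matching lower bound $g(W^-(l,k))\ge 3$ under the hypothesis $l+k\notin\{2,3,4,5\}$. Only the second argument uses the arithmetic condition on $l+k$.

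For the upper bound I would exhibit relative trisection diagrams on a genus-$3$ surface $\Sigma_{3,b}$ explicitly and uniformly in $(l,k)$. Each $W^\pm(l,k)$ has a handle decomposition with one $0$-, one $1$-, and one $2$-handle (Figures~\ref{fig:Kd-W-} and \ref{fig:Kd-W+}), and $W^-(0,0)$ is the Akbulut cork. I would start from a fixed genus-$3$ relative trisection of the cork (the one of Remark~\ref{rem:2Acork}) and obtain the general member of the family by modifying the cut curves so as to record how the $2$-handle runs over the $1$-handle, the modification amounting to $l+k$ twists together with a clasp whose sign distinguishes $W^+$ from $W^-$. Two checks are then required: first, that each of the three pairs among $\alpha,\beta,\gamma$ becomes, after handleslides, the standard diagram of a $4$-dimensional $1$-handlebody $\natural^k(S^1\times B^3)$, so that the data is genuinely a relative trisection; and second, that reassembling the $4$-manifold from the diagram returns $W^\pm(l,k)$. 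Both are routine but must be carried out uniformly in $(l,k)$.

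The lower bound is the crux, and the main obstacle. Since $W^-(l,k)$ is contractible, no bound on $g$ can come from the homology or fundamental group of the total space, so the obstruction must be extracted from the boundary $Y_{l,k}:=\partial W^-(l,k)$. I would argue by contradiction. A relative trisection of genus at most $2$ compresses its central surface onto a page of genus $p\le 2$, hence induces an open book on $Y_{l,k}$ with page of genus at most $2$; and, in parallel with the Meier--Zupan classification of closed trisections of genus $\le 2$ in \cite{MeiZup17_1}, it forces $W^-(l,k)$ to lie in a short list of homology $4$-balls. Correspondingly, the homology spheres that can bound such a relative trisection form a restricted family of small complexity. I would then identify $Y_{l,k}$ explicitly as the Seifert homology sphere arising as the boundary of the Akbulut--Kirby manifold and compute a distinguishing invariant of it --- its Heegaard genus, or the minimal genus of a page among its open books --- to show that $Y_{l,k}$ does not belong to this family, contradicting $g\le 2$.

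The subtle point is the dependence on $l+k$. For $l+k\in\{2,3,4,5\}$ the boundary $Y_{l,k}$ degenerates to a homology sphere of small complexity for which the obstruction is inconclusive, so the method is silent there and the theorem claims nothing; for every other value $Y_{l,k}$ should be complex enough to rule out a relative trisection of genus $\le 2$, whence $g=3$. I expect the two genuinely hard steps to be (i) sharpening the low-genus classification to the relative setting finely enough to determine exactly which homology spheres bound a relative trisection of genus $\le 2$, and (ii) identifying the family $\{Y_{l,k}\}$ and evaluating the chosen invariant, so that the threshold between the excluded and the permitted values emerges precisely as $\{2,3,4,5\}$.
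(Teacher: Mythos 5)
Your upper bound is essentially the paper's argument: the paper constructs a one-parameter family of explicit $(3,3;0,4)$-diagrams $\calD_n$ (built around the cork diagram, exactly as you propose in Remark~\ref{rem:2Acork}), verifies standardness of the three pairs by diffeomorphisms and handle slides, recovers the $4$-manifold via the Kim--Miller algorithm \cite{KimMil20} to get $W^-(0,n+2)$, and then uses the Akbulut--Kirby relations $W^-(l,k)\cong W^-(0,l+k)$ and $W^+(l,k)\cong\overline{W^-(0,-l-k+3)}$ from Proposition~\ref{prop:AK} to cover the whole family; your ``uniform in $(l,k)$'' construction with $l+k$ twists is the same idea, and this half is sound modulo the diagrammatic checks you correctly identify.

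The lower bound, however, has a genuine gap, and your geometric picture is inverted. You propose to classify which homology spheres bound genus-$\le 2$ relative trisections by ``sharpening'' the Meier--Zupan genus-$2$ classification \cite{MeiZup17_1} to the relative setting; no such relative classification exists, it would be a substantial independent project (as you yourself flag), and nothing in your sketch produces the exceptional set $\{2,3,4,5\}$ --- Heegaard genus or page genus of the boundary has no known mechanism to yield exactly that threshold. Worse, you plan to identify $Y_{l,k}=\partial W^-(l,k)$ as a \emph{Seifert} homology sphere and obstruct from there, but the relevant fact is the opposite: $\partial W^-(0,k)$ is an integral surgery on the Mazur link, and by Yamada's complete list of exceptional surgeries \cite{Yam18} it is \emph{hyperbolic} precisely when $k\notin\{2,3,4,5\}$ (the excluded values are where it degenerates to a non-hyperbolic manifold, which is why the theorem is silent there). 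The paper's lower bound then needs no classification at all: it quotes \cite[Corollary~4.2]{Tak22a}, which gives $g(X)\ge\chi(X)+2$ whenever $\partial X$ is hyperbolic --- the mechanism being that a small-genus relative trisection would induce an open book on the boundary too simple to be supported by a hyperbolic manifold (compare \cite{Ari08}) --- and since $W^-(0,k)$ is contractible, $\chi=1$ forces $g\ge 3$. Without the hyperbolicity input and Yamada's surgery classification, your argument cannot close, and the Seifert identification would actively point you away from the correct obstruction.
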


\begin{figure}[!h]
	\begin{tabular}{cc}
		\begin{minipage}[t]{0.45\hsize}
		\centering
		\includegraphics[scale=0.9]{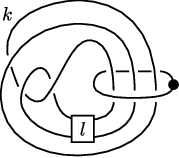}
		\caption{$W^-(l,k)$}
		\label{fig:Kd-W-}
		\end{minipage} &
		\begin{minipage}[t]{0.45\hsize}
		\centering
		\includegraphics[scale=0.9]{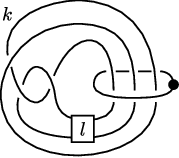}
		\caption{$W^+(l,k)$}
		\label{fig:Kd-W+}
		\end{minipage}
	\end{tabular}
\end{figure}

 Note that a genus-$g$ relative trisection refers to a relative trisection with the triple intersection surface of genus $g$.
 In Section~3, we give genus-$3$ relative trisection diagrams of $W^\pm(l,k)$ (see Figure~\ref{fig:td-D_n}).
 In fact, there are no known examples of contractible $4$-manifolds (other than the $4$-ball) that admit relative trisections of genus less than $3$.
 Thus, the following question naturally arises:

\begin{qst}~\label{qst:ctr-tg}
 Does there exist a contractible $4$-manifold $X$ with $g(X)<3$ other than the $4$-ball?
\end{qst}

 As we will see later, the contractible $4$-manifold $W^-(l,k)$ could potentially admit a genus-$2$ relative trisection if $l+k\in\{2,3,4,5\}$, however, we have not been able to prove the existence (or the non-existence) of such a structure.
 For more examples of trisected contractible $4$-manifolds, see \cite{Sen22} and \cite{Tak22a}.
 In \cite{Sen22}, {\c{S}}en produced a relative trisection of a Mazur-type $4$-manifold bounded by the Brieskorn sphere $\Sigma(2,5,7)$.
 Independently, the author~\cite{Tak22a} showed that there exist infinitely many corks with trisection genus $3$.

\section{Preliminaries}\label{sec:prel}

 We use the following conventions throughout this paper:
 All manifolds and surfaces are assumed to be compact, connected, oriented, and smooth.
 For a manifold $X$, we let $\overline{X}$ denote $X$ with the opposite orientation.
 In addition, if two smooth manifolds $X$ and $Y$ are orientation-preserving diffeomorphic, then we write $X\cong Y$.


\subsection{Relative trisections}

 The notion of a relative trisection was firstly introduced by Gay and Kirby~\cite{GayKir16} as a trisection for a compact, connected, oriented, smooth $4$-manifold with non-empty connected boundary.
 In this subsection, we introduce the fundamental properties of relative trisections.
 (See \cite[Section~6]{GayKir16} and \cite[Section~3]{CasGayPin18_1} for the precise definition of relative trisections.)

 Let $g$, $k$, $p$, and $b$ be integers satisfying the inequalities $g,k,p\geq0$, $b\geq1$, and $2p+b-1\leq k\leq g+p+b-1$.
 For a compact, connected, oriented, smooth $4$-manifold $X$ with non-empty connected boundary, if a decomposition $X=X_1\cup X_2\cup X_3$ is a $(g,k;p,b)$-relative trisection, then the following properties hold:
\begin{itemize}
\item
 Each sector $X_i$ is diffeomorphic to the $4$-dimensional $1$-handlebody of genus~$k$.
\item
 For each integer $i\in\{1,2,3\}$, taking indices mod $3$, the double intersection $X_i\cap X_{i+1}(=\partial{X_i}\cap\partial{X_{i+1}})$ is a compression body diffeomorphic to the $3$-dimensional $1$-handlebody of genus $g+p+b-1$.
\item
 The triple intersection $X_1\cap X_2\cap X_3$ is diffeomorphic to the genus-$g$ surface with $b$ boundary components.
\item
 There exists an open book decomposition of $\partial{X}$ with pages of genus $p$ with $b$ boundary components.
\end{itemize}

 As we mentioned in the introduction, a trisection naturally leads to an integer-valued invariant of smooth $4$-manifolds.
 The \textit{genus} of a trisection $X=X_1\cup X_2\cup X_3$ is the genus of the triple intersection surface $X_1\cap X_2\cap X_3$.
 We remark that this value is defined for a trisection rather than for a $4$-manifold.
 The trisection genus of a smooth $4$-manifold $X$, denoted by $g(X)$, is defined as the minimal integer $g$ such that $X$ admits a (relative) trisection of genus $g$.

\begin{rem}\label{rem:addreltg}
 Lambert-Cole and Meier showed in \cite[Proposition~1.7]{LamMei20} that if trisection genus for closed $4$-manifolds is additive under connected sum, then Conjecture~\ref{conj:exotic-tg} is true (i.e., exotic closed $4$-manifolds have the same trisection genus).
 Here, we give a proof that the same statement holds for the relative case.
 Assume that trisection genus for $4$-manifolds with boundary is additive under boundary connected sum.
 Let $(X,Y)$ be an exotic pair of $4$-manifolds with non-empty connected boundary.
 By the generalized Wall's theorem~\cite{Gom84} (see \cite{Wal64} for the original work of Wall), there exists a positive integer $n$ such that $X\#(\#^nS^2\times S^2)\cong Y\#(\#^nS^2\times S^2)$.
Note that a connected sum $M\#N$ with $\partial{M}\neq\emptyset$ and $\partial{N}=\emptyset$ is diffeomorphic to the boundary connected sum $M\natural(N-\Int{D^4})$ (see \cite[p.128]{GomSti99b}).
%
 Thus, we have $X\natural((\#^nS^2\times S^2)-\Int{D^4})\cong Y\natural((\#^nS^2\times S^2)-\Int{D^4})$.
 By the assumption, it follows that $g(X)=g(Y)$.
\end{rem}

 A relative trisection diagram is a description of a relatively trisected $4$-manifold.
 We now introduce the definition of relative trisection diagrams given by Castro, Gay, and Pinz\'{o}n-Caiced~\cite{CasGayPin18_1} .

\begin{defi}
 For an integer $i\in \{1,\ldots,n\}$, let $\alpha^i$ and $\beta^i$ be families of pairwise disjoint simple closed curves on surfaces $\Sigma$ and $\Sigma'$, respectively.
 The two $n+1$-tuples $(\Sigma;\alpha^1,\ldots,\alpha^n)$ and $(\Sigma';\beta^1,\ldots,\beta^n)$ are called \textit{diffeomorphism and handle slide equivalent}  if they are related by diffeomorphisms on $\Sigma$ and handle slides within each $\alpha^i$ (i.e., we are only allowed to slide curves from $\alpha^i$ over other curves from $\alpha^i$, but not over curves from $\alpha^j$ when $j\neq i$).
\end{defi}

\begin{defi}
 Let $\alpha$, $\beta$, and $\gamma$ be families of $g-p$ pairwise disjoint simple closed curves on a genus-$g$ surface $\Sigma$ with $b$ boundary components.
 A $4$-tuple $(\Sigma;\alpha,\beta,\gamma)$ is called a $(g,k;p,b)$-\textit{relative trisection diagram} if $(\Sigma;\alpha,\beta)$, $(\Sigma;\beta,\gamma)$, and $(\Sigma;\gamma,\alpha)$ are diffeomorphism and handle slide equivalent to the standard diagram $(\Sigma;\delta,\epsilon)$ of type $(g,k;p,b)$ shown in Figure \ref{fig:std-rtd}, where the red curves are $\delta$ and the blue curves are $\epsilon$.
\end{defi}
\begin{figure}[!h]
\centering
\includegraphics[scale=0.9]{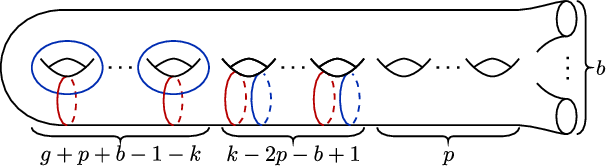}
\caption{The standard diagram $(\Sigma;\delta,\epsilon)$ of type $(g,k;p,b)$.}
\label{fig:std-rtd}
\end{figure}

 For a relative trisection diagram $(\Sigma;\alpha,\beta,\gamma)$, we draw $\alpha$, $\beta$, and $\gamma$ curves by red, blue, and green curves, respectively.
 See Figure~\ref{fig:td-D_n} for example.
 A pair of two black disks labeled with the same white number indicates an attaching of a cylinder. 
 More precisely, we remove these two black disks from the surface and glue in a cylinder $S^1\times [0,1]$ along the boundaries of these disks to obtain an oriented surface of one higher genus.

 The following theorem gives a correspondence between relative trisections and relative trisection diagrams:

\begin{thm}[Castro--Gay--Pinz\'{o}n-Caiced~{\cite{CasGayPin18_1}}]\label{thm:rt-rtd}
There is a natural bijection
\begin{align*}
\frac{\{\text{relative trisections}\}}{\text{diffeomorphism}} \to \frac{\{\text{relative trisection diagrams}\}}{\text{diffeomorphism and handleslide equivalent}}.
\end{align*}
\end{thm}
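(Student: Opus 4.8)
The plan is to prove the bijection by constructing explicit maps in both directions and checking that they are mutually inverse, following the template set by Gay--Kirby's correspondence for closed trisections and by the classical Heegaard-diagram dictionary for $3$-manifolds. First, to pass from a relative trisection $X=X_1\cup X_2\cup X_3$ to a diagram, I would take $\Sigma=X_1\cap X_2\cap X_3$, which by hypothesis is a genus-$g$ surface with $b$ boundary components, and record on $\Sigma$ the three cut systems coming from the double intersections. Each double intersection $X_i\cap X_j$ is a compression body, and the isotopy classes of the $g-p$ simple closed curves on $\Sigma$ that bound compressing disks of this compression body (compressing $\Sigma$ down to the genus-$p$ page) form a cut system; the three double intersections thus produce families $\alpha$, $\beta$, $\gamma$, giving the $4$-tuple $(\Sigma;\alpha,\beta,\gamma)$.

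Next I would verify that this $4$-tuple really is a relative trisection diagram, i.e.\ that each pair $(\Sigma;\alpha,\beta)$, $(\Sigma;\beta,\gamma)$, $(\Sigma;\gamma,\alpha)$ is diffeomorphism and handle-slide equivalent to the standard diagram of type $(g,k;p,b)$. This is exactly the assertion that each sector $X_i$, presented together with its two bounding compression bodies, is a standardly embedded $4$-dimensional $1$-handlebody of genus $k$; here one uses the hypothesis that every $X_i\cong\natural^{k}(S^1\times D^3)$ and that the open book on $\partial X$ has page genus $p$ with $b$ boundary components, matching the model near $\partial\Sigma$.

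In the reverse direction, given a diagram $(\Sigma;\alpha,\beta,\gamma)$ I would reconstruct $X$ as follows: from each cut system build a compression body with positive boundary $\Sigma$, glue these in consecutive pairs along $\Sigma$ to form three $3$-manifolds with boundary, and observe that, by the standard-piece condition, each such piece is the inner boundary of a $4$-dimensional $1$-handlebody $\natural^{k}(S^1\times D^3)$ in an essentially canonical fashion. Filling in these handlebodies as the sectors $X_i$ and assembling along the compression bodies yields a $4$-manifold $X$ carrying a tautological relative trisection, with the open book on $\partial X$ read off from the standard model near $\partial\Sigma$.

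The hard part will be the well-definedness and injectivity of the reconstruction: I must show that the capping-off of each sector is unique up to diffeomorphism rel boundary, and that the two equivalence relations correspond. The crucial ingredient is an extension theorem of Laudenbach--Po\'enaru type, namely that every self-diffeomorphism of $\#^{k}(S^1\times S^2)$ (in its relative form, compatible with the open book) extends over $\natural^{k}(S^1\times D^3)$; this guarantees that the filling of each sector is independent of the choices made, so that a diffeomorphism of $\Sigma$ carrying one diagram to another extends to a diffeomorphism of the reconstructed $4$-manifolds, while a handle slide within a single cut system corresponds to an isotopy of the attaching data inside the associated handlebody and hence leaves $X$ unchanged. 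Conversely, any diffeomorphism between two relative trisections can be isotoped to preserve the central surface, after which it is realized by diffeomorphisms of $\Sigma$ together with handle slides within each cut system. Granting the extension theorem and these two compatibilities, the two constructions are seen to be mutually inverse, establishing the stated bijection.
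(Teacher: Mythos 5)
The paper itself offers no proof of this statement: it is quoted verbatim as a cited theorem of Castro--Gay--Pinz\'on-Caicedo \cite{CasGayPin18_1}, so the only meaningful comparison is with the original proof in that reference. Your sketch reproduces its architecture faithfully: extract $\Sigma$ and the three cut systems of compressing curves from the double intersections, reconstruct the $4$-manifold from a diagram by building compression bodies and capping each sector with $\natural^k(S^1\times D^3)$, and reduce well-definedness and injectivity to (a) a Laudenbach--Po\'enaru-type extension result guaranteeing uniqueness of the $4$-dimensional filling, and (b) the classical fact that cut systems for a fixed compression body are unique up to handle slides, which converts a diffeomorphism of trisected manifolds into a diffeomorphism of $\Sigma$ plus slides within each family. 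These are exactly the ingredients of the proof in \cite{CasGayPin18_1}.

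The one place where you lean on an assumption rather than an argument is the ``relative form'' of the extension theorem, and this is precisely where the original does its real work. Note first that the closed Laudenbach--Po\'enaru theorem almost suffices, since each sector is a genus-$k$ $1$-handlebody with closed boundary $\#^k(S^1\times S^2)$; but in the relative setting the sectors are glued along pieces meeting $\partial X$, so one must control diffeomorphisms near $\partial\Sigma$ compatibly with the induced open book, and the uniqueness of the assembly is not an immediate corollary of the closed statement. Castro--Gay--Pinz\'on-Caicedo handle this by a careful analysis of the standard pieces near the boundary (their ``lensed'' cobordisms and the standard models of type $(g,k;p,b)$), showing that the boundary data forces the gluings to be standard before Laudenbach--Po\'enaru is invoked. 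As a blind sketch your proposal is correct in outline and identifies the right crucial lemma, but a complete proof would have to supply this boundary-compatibility analysis rather than cite the extension theorem ``in its relative form'' as a known result.
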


 We sometimes denote a relative trisection diagram by the symbol $\calD$.
 If $X$ is diffeomorphic to the trisected $4$-manifold corresponding to $\calD$ by Theorem~\ref{thm:rt-rtd}, then we simply say that $\calD$ is a relative trisection diagram of $X$.

\subsection{Corks and Mazur-type $4$-manifolds}\label{subsec:corks}

 We first review the definition and basic properties of corks.

\begin{defi}
 Let $C$ be a compact, contractible, smooth $4$-manifold with boundary and $\tau:\partial{C}\to \partial{C}$ be a smooth involution on the boundary.
 The pair $(C,\tau)$ is called a \textit{cork}, if $\tau$ extends to a self-homeomorphism of $C$, but cannot extend to any self-diffeomorphism of $C$.
\end{defi}

 A cut-and-paste operation along a cork is useful for constructing exotic smooth structures on $4$-manifolds.
 Suppose that a smooth $4$-manifold $X$ contains a cork $C$ as a submanifold. 
 Let $X'$ be the $4$-manifold obtained by removing $C$ from $X$ and re-gluing it by $\tau$ (i.e. $X' := (X-C)\cup_{\tau}C$).
 Then, $X$ and $X'$ are homeomorphic, but they may not be diffeomorphic. 
 This operation is called a \textit{cork twist} along $(C,\tau)$.
 An important fact is that, conversely, any two simply-connected, closed, exotic $4$-manifolds are related by a cork twist (\cite{Mat96}, \cite{CurFreHsiSto96}).

 The Mazur-type $4$-manifold $W^-(0,0)$ (see Figure~\ref{fig:Kd-W-}) admits a cork structure (\cite{Akb91_1}).
 It is the first example of a cork and is called the Akbulut cork.
 The following gives infinitely many exotic pairs obtained by twisting along the Akbulut cork:

\begin{exm}[Akbulut--Yasui~{\cite[Subsection~9.1]{AkbYas13}}, see also \cite{Akb91_1} for the case $n=1$]
 For a positive integer $n$, let $P_{n}$ and $Q_{n}$ be the $4$-manifolds given by the handlebody diagrams of Figures~\ref{fig:Kd-P_n} and \ref{fig:Kd-Q_n}, respectively.
 Then $(P_{n}, Q_{n})$ is an exotic pair, and is obtained by twisting along the Akbulut cork.  
\end{exm}

 We note that $P_n$ and $Q_n$ are related by exchanging the zero and the dot in the diagrams. 
 In this paper, we mainly focus on the exotic pair $(P_1, Q_1)$.
\begin{figure}[!h]
	\begin{tabular}{cc}
		\begin{minipage}[t]{0.45\hsize}
		\centering
		\includegraphics[scale=0.9]{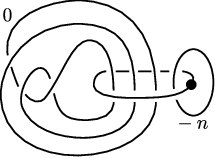}
		\caption{$P_{n}$}
		\label{fig:Kd-P_n}
		\end{minipage} &
		\begin{minipage}[t]{0.45\hsize}
		\centering
		\includegraphics[scale=0.9]{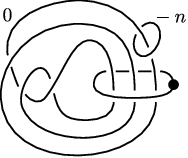}
		\caption{$Q_{n}$}
		\label{fig:Kd-Q_n}
		\end{minipage}
	\end{tabular}
\end{figure}

 Recall that a compact, contractible, smooth $4$-manifold $X$ with boundary is called Mazur-type, if $X$ admits a handle decomposition consisting of single $0$-, $1$-, and $2$-handle.
 In \cite{Maz61}, Mazur introduced the first example of such a $4$-manifold, which is diffeomorphic to $W^+(0,0)$.
 Many of the known corks are Mazur-type (see e.g., \cite{AkbMat98}, \cite{AkbYas08}, \cite{AukKimMelRub15}, \cite{DaiHedMal23}).
 The Mazur-type $4$-manifolds $W^\pm(l,k)$ given by Figures~\ref{fig:Kd-W-} and \ref{fig:Kd-W+} can be considered as a generalization of the Akbulut cork.
 Here, we introduce some properties of $W^\pm(l,k)$.

\begin{prop}[Akbulut--Kirby~{\cite[Proposition~1]{AkbKir79}}, see also {\cite{Akb16a}}]\label{prop:AK}
 For any integers $l$ and $k$, the following relations hold:
\begin{itemize}
\item $W^\pm(l,k)\cong W^\pm(l+1,k-1)$.
\item $W^-(l,k)\cong \overline{W^+(-l,-k+3)}$.
\end{itemize}
\end{prop}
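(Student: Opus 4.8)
The plan is to prove both relations by direct Kirby calculus on the handlebody diagrams of Figures~\ref{fig:Kd-W-} and \ref{fig:Kd-W+}. Recall that each $W^\pm(l,k)$ is presented by a single dotted circle (the $1$-handle) together with a single $2$-handle, where the integers $l$ and $k$ record the numbers of full twists inserted in two designated twist regions of the attaching circle, and the superscript $\pm$ records the sign of the clasp between the attaching circle and the dotted circle. My strategy is to exhibit, for each relation, an explicit sequence of diagram moves---ambient isotopies of the attaching circle in the complement of the dotted circle, handle slides, and (for the second relation) a reflection---carrying one diagram to the other while keeping track of the induced framing.

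For the first relation $W^\pm(l,k)\cong W^\pm(l+1,k-1)$, I would observe that the two twist regions lie ``in series'' along a common strand of the attaching circle: the strand enters one twist region and then, after passing once around, re-enters the other. The key geometric move is to slide a single full twist from the $k$-region, through the intervening arc, into the $l$-region. Because this is realized by an isotopy of the attaching circle (not crossing the dotted circle), the diffeomorphism type is unchanged; the only thing to verify is that the framing---computed as the writhe of the diagram relative to the blackboard framing---is preserved. Since transferring a full twist adds the same writhe contribution in the $l$-region that it removes in the $k$-region, the net framing is invariant, which establishes the claim. Note that the quantity $l+k$ is visibly preserved, consistent with the hypothesis $l+k\notin\{2,3,4,5\}$ appearing in Theorem~\ref{main:AKMazur}.

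For the second relation $W^-(l,k)\cong\overline{W^+(-l,-k+3)}$, I would start from the fact that reversing the orientation of a $4$-manifold corresponds, on the level of Kirby diagrams, to reflecting the diagram in a plane: every crossing changes sign, every $2$-handle framing $n$ becomes $-n$, and a dotted circle remains a dotted circle. Reflecting the $W^-(l,k)$ diagram therefore negates both twist parameters (sending $l\mapsto -l$ and $k\mapsto -k$) and flips the clasp sign from $-$ to $+$, turning the $W^-$ picture into a $W^+$ picture with parameters $(-l,-k)$. The remaining discrepancy is a framing/twist correction: the reflection, together with the conventions fixing the base framing of the $2$-handle and the sign of the clasp, contributes an extra shift of $+3$ to the $k$-parameter, which I would pin down by comparing the self-linking numbers before and after reflection. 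After absorbing this correction by the standard normalization of the diagram, the reflected picture is exactly that of $W^+(-l,-k+3)$, giving $W^-(l,k)\cong\overline{W^+(-l,-k+3)}$.

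The main obstacle I anticipate is the bookkeeping in both steps: verifying that the isotopy in the first relation genuinely preserves the framing rather than secretly introducing a handle slide that alters the $2$-handle's framing, and isolating the precise origin of the ``$+3$'' correction in the second relation. Both reduce to a careful and orientation-sensitive count of writhe and linking contributions in the diagrams, so I would organize the argument around tracking a single framing integer through each elementary move, which is tedious but routine once the conventions of Figures~\ref{fig:Kd-W-} and \ref{fig:Kd-W+} are fixed.
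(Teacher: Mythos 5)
The paper offers no proof of Proposition~\ref{prop:AK}---it is quoted directly from Akbulut--Kirby~\cite{AkbKir79}---so your attempt has to be measured against their classical argument, and against internal consistency. Measured that way, the second bullet of your sketch has a genuine gap: the shift by $3$ is the \emph{entire} content of that relation, and your argument does not produce it. You assert that reflecting the $W^-(l,k)$ diagram yields the standard $W^+$ picture with parameters $(-l,-k)$, and then postulate ``an extra shift of $+3$'' arising from unspecified framing conventions, to be ``pinned down by comparing self-linking numbers.'' These two assertions are incompatible. If the mirror of the standard $W^-(l,k)$ diagram really were the standard $W^+(-l,-k)$ diagram, then combining this with the proposition itself (and with the first bullet, which shows only $l+k$ matters) would force $W^+(0,m)\cong W^+(0,m+3)$ for all $m$; this is false, since $\partial{W^+(0,0)}\cong\Sigma(2,5,7)$ is Seifert fibered, whereas $\partial{W^+(0,3)}\cong\overline{\partial{W^-(0,0)}}$ is hyperbolic by Yamada's theorem~\cite{Yam18}, exactly as exploited elsewhere in this paper. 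So the mirror diagram is \emph{not} in standard form, and the $3$ must be extracted from an honest diagrammatic normalization---comparing self-linking numbers can only test a guessed answer, never construct the diffeomorphism. In Akbulut--Kirby's argument the orientation reversal is instead implemented by turning the handlebody upside down, using the symmetry of the Mazur link to exchange the roles of the dotted circle and the framed circle, and the shift by $3$ falls out of bringing the resulting picture back to the standard form; your proposal contains no substitute for this step.

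The first bullet is also shakier than you suggest. If the twist transfer were realized by a plain isotopy of the attaching circle in the complement of the dotted circle, there would be nothing left to verify: the framing coefficient of a $2$-handle is an isotopy invariant, so your proposed writhe bookkeeping is a red herring (writhe only enters under blackboard-framing conventions, which the figures here do not use). The fact that a check seems necessary is a symptom that the move is not a plain isotopy: in the diagrams of Figures~\ref{fig:Kd-W-} and \ref{fig:Kd-W+} the two twist regions are not joined by a parallel two-strand cable along which a full twist could slide freely---the curve passes through the $1$-handle and the clasp in between. The classical mechanism is to ``twist the $1$-handle'': in ball notation, rotate one attaching ball by $2\pi$, which is a diffeomorphism of the $4$-manifold (by uniqueness of ball embeddings) that inserts a full twist in the strands running through the handle and thereby trades a twist in one region for a twist in the other. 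Your isotopy-plus-writhe framework does not capture this move, and without it the first relation is unproved.
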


 In addition to the above, it is known that the boundary of $W^+(0,k)$ is a Brieskorn homology sphere for some $k$ (e.g. $\partial{W^+(0,0)}\cong \Sigma(2,5,7)$, $\partial{W^+(-1,0)}\cong \Sigma(3,4,5)$, and $\partial{W^+(1,0)}\cong \Sigma(2,3,13)$).

\section{Relative trisections of $W^\pm(l,k)$.}

 In this section, we show that each of the Mazur-type $4$-manifolds $W^\pm(l,k)$ admits a genus-$3$ relative trisection.
 We first give a genus-$3$ relative trisection diagram, and then show that the induced $4$-manifold is diffeomorphic to $W^-(l,k)$.
 Our proof is similar to that of \cite[Theorem~1.3]{Tak22a}.

\begin{lem}\label{lem:3304td}
 For an integer $n$, let $\calD_n=(\Sigma;\alpha,\beta, \gamma)$ be the diagram shown in Figure~\ref{fig:td-D_n}.
 Then, $\calD_n$ is a $(3,3;0,4)$-relative trisection diagram.
\end{lem}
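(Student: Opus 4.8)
The plan is to verify Lemma~\ref{lem:3304td} directly from the definition of a relative trisection diagram. First I would fix the combinatorial data of $\Sigma$: reading off Figure~\ref{fig:td-D_n}, and accounting for the cylinders glued along the pairs of black disks, I would confirm that $\Sigma$ is a genus-$3$ surface with $b=4$ boundary components, and that each of the families $\alpha$, $\beta$, $\gamma$ consists of exactly $g-p=3-0=3$ pairwise disjoint simple closed curves. I would also record that the parameters satisfy the admissibility inequalities $2p+b-1=3\leq k=3\leq g+p+b-1=6$, so that $(3,3;0,4)$ is a legitimate type. This reduces the lemma to its substantive content: by definition it suffices to show that each of the three pairs $(\Sigma;\alpha,\beta)$, $(\Sigma;\beta,\gamma)$, and $(\Sigma;\gamma,\alpha)$ is diffeomorphism and handle slide equivalent to the standard diagram $(\Sigma;\delta,\epsilon)$ of type $(3,3;0,4)$ depicted in Figure~\ref{fig:std-rtd}.

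For each of the three pairs I would produce an explicit sequence of moves --- isotopies and ambient diffeomorphisms of $\Sigma$, together with handle slides performed \emph{within a single color family} --- carrying it to the standard picture. Following the strategy of \cite[Theorem~1.3]{Tak22a}, I expect the diagram to be arranged so that one or two of the pairs are already in (or only an isotopy away from) standard position, so that the bulk of the work concentrates on the remaining pair. The guiding principle is that compressing $\Sigma$ along either family of a pair must realize the genus-$6$ compression body that the definition requires for a double intersection, and that the capped-off boundary must carry the standard genus-$0$, $4$-boundary open book; tracking these invariants as I go provides a check that the target of each reduction is indeed the standard diagram of type $(3,3;0,4)$, and not of some other admissible type.

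The main obstacle I anticipate is the dependence on the integer $n$. Since $\calD_n$ is asserted to be a $(3,3;0,4)$ diagram for \emph{every} $n$, while the standard diagram is fixed, the parameter $n$ --- appearing as curves that wind $n$ times through one of the handles --- must be absorbed entirely by handle slides. The delicate point is to organize these unwinding slides so that they never slide a curve of one color over a curve of a different color (which is forbidden) and so that the intermediate configurations remain embedded; confirming that the outcome of the permitted slides is exactly the standard configuration, uniformly in $n$, is where the real care is needed. Once all three pairs have been reduced to $(\Sigma;\delta,\epsilon)$, the lemma follows immediately from the definition.
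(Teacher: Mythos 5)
Your overall framework is the same as the paper's: reduce the lemma to checking that each of $(\Sigma;\alpha,\beta)$, $(\Sigma;\beta,\gamma)$, $(\Sigma;\gamma,\alpha)$ is diffeomorphism and handle slide equivalent to the standard diagram of type $(3,3;0,4)$, with $(\Sigma;\alpha,\beta)$ essentially immediate. But as written the proposal has a genuine gap: it never produces the sequences of moves that constitute the actual proof. Everything after ``I would produce an explicit sequence of moves'' is a description of what a proof would contain, together with sanity checks (curve counts, the parameter inequalities, the genus-$6$ compression bodies), none of which touches the substantive question of whether \emph{these particular} $\gamma$ curves, with their winding and their passage through the tubes, can actually be standardized. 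The paper's proof consists precisely of those explicit sequences (its Figures~\ref{fig:td-W^-(0,k)-Sigma-ca} and \ref{fig:td-W^-(0,k)-Sigma-bc}), built from four specific operations: three Dehn-twist moves and one intra-family handle slide move, the latter used to pass the black disks through $\beta$ curves during the standardization of $(\Sigma;\beta,\gamma)$.

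More importantly, on the one point where you commit to a mechanism --- the uniformity in $n$ --- you misidentify the tool. You assert that since the standard diagram is fixed, the winding parameter $n$ ``must be absorbed entirely by handle slides.'' That is not how it works, and attempting it that way would be at best needlessly delicate: the equivalence in the definition allows arbitrary diffeomorphisms of $\Sigma$, applied separately to each pair, and the paper kills the winding by the Dehn-twist operation (i), performed along a curve encircling a black disk (i.e., around the attached cylinder). Since this is a self-diffeomorphism of $\Sigma$, applying it $|n|$ times removes all rotations of a $\gamma$ curve about that disk at once, which is exactly why the paper can exhibit the computation for a single value ($n=-2$) and remark that the same operations work for every $n$. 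Your plan, by contrast, localizes the $n$-dependence in ``unwinding slides'' within a single color family, whose existence you neither establish nor reduce to anything checkable; this is the step most likely to fail or to balloon in complexity if carried out literally.
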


\begin{figure}[!htbp]
\centering
\includegraphics[scale=0.65]{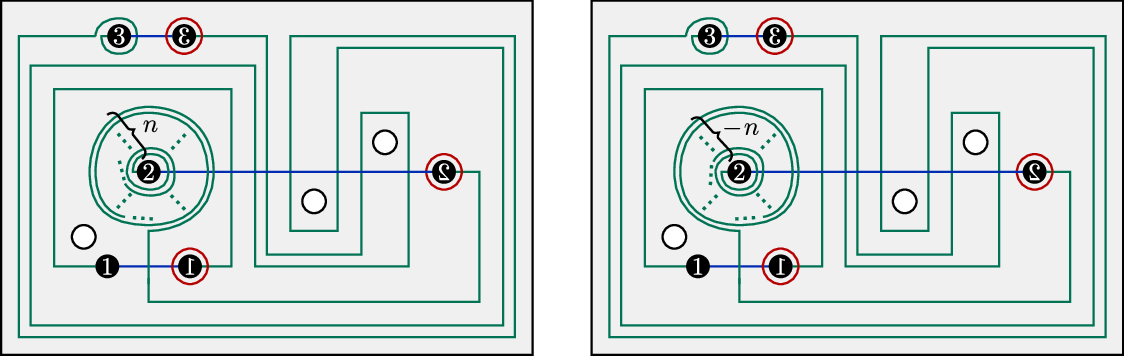}
\caption{Left: $\calD_n$ for $n>0$. Right: $\calD_n$ for $n\leq0$.}
\label{fig:td-D_n}
\end{figure}

\begin{proof}
 We prove that the $3$-tuples $(\Sigma,\alpha,\beta)$, $(\Sigma,\beta,\gamma)$, and $(\Sigma,\gamma,\alpha)$ are diffeomorphism and handle slide equivalent to the standard diagram of type $(3,3;0,4)$ shown in Figure~\ref{fig:std3304}.
 It is easy to see that $(\Sigma;\alpha,\beta)$ is standard.
 To prove for the remaining cases, we use the operations shown in Figure~\ref{fig:modif1234}, which were introduced in Section~3 of \cite{Tak22a}.
 The operations (i), (iii), and (iv) are obtained by Dehn twists, and (ii) is obtained by a handle slide of curves.

 In the following, we describe the proof for the case $n=-2$ (see Figures~\ref{fig:td-W^-(0,k)-Sigma-ca} and \ref{fig:td-W^-(0,k)-Sigma-bc}).
 We note that the same operations can be performed for any other integer $n$.
 By the diffeomorphisms on $\Sigma$ shown in Figure~\ref{fig:td-W^-(0,k)-Sigma-ca}, we can modify $(\Sigma;\gamma,\alpha)$ into the standard diagram.
 The resulting diagram is obtained by dragging the black disks labeled with ``$3$'', ``$2$'', and ``$1$'' along the marked $\gamma$ curves. 
 We remark that one can ignore the number of rotations of a $\gamma$ curve with respect to a black disk by using the operation (i).
 The proof of the case $(\Sigma;\beta,\gamma)$ is shown in Figure~\ref{fig:td-W^-(0,k)-Sigma-bc}.
 Each of the third and fifth diagrams is obtained by dragging the black disk with the blue circle along the marked $\gamma$ curves.
 In these processes, when the black disks approach $\beta$ curves, then it can pass through by using the operation (ii).
 Applying the operation (iv) to the last diagram, we obtain the standard diagram.
\end{proof}

\begin{figure}[!tbp]
\centering
\includegraphics[scale=0.8]{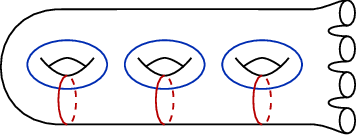}
\caption{The standard diagram of type $(3,3;0,4)$.}
\label{fig:std3304}
\end{figure}
\begin{figure}[!tbp]
\centering
\includegraphics[scale=0.85]{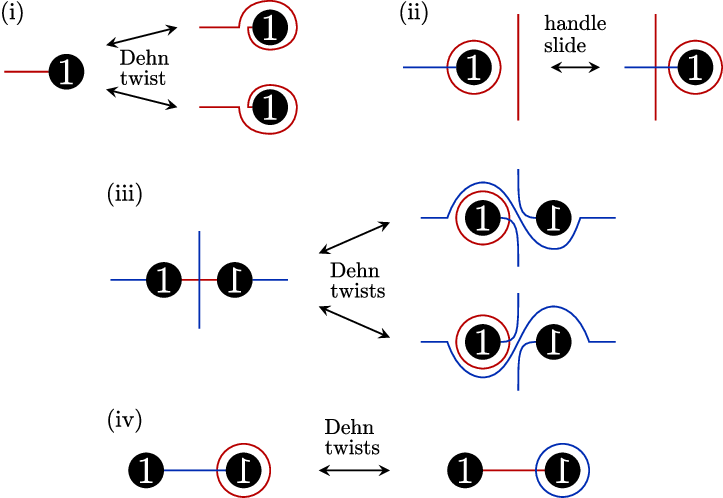}
\caption{The operations (i), (ii), (iii), and (iv).}
\label{fig:modif1234}
\end{figure}
\begin{figure}[!tbp]
\centering
\includegraphics[scale=0.85]{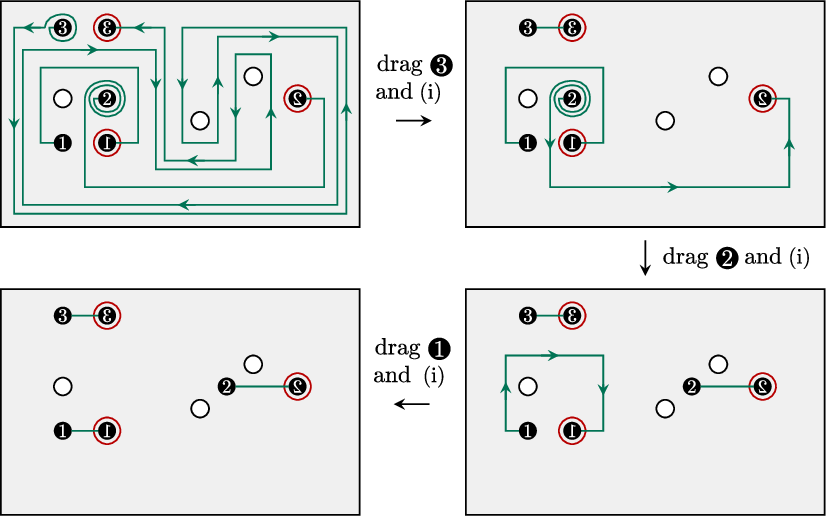}
\caption{Diffeomorphisms that modify $(\Sigma;\gamma,\alpha)$ into the standard diagram.}
\label{fig:td-W^-(0,k)-Sigma-ca}
\end{figure}
\begin{figure}[!tbp]
\centering
\includegraphics[scale=0.85]{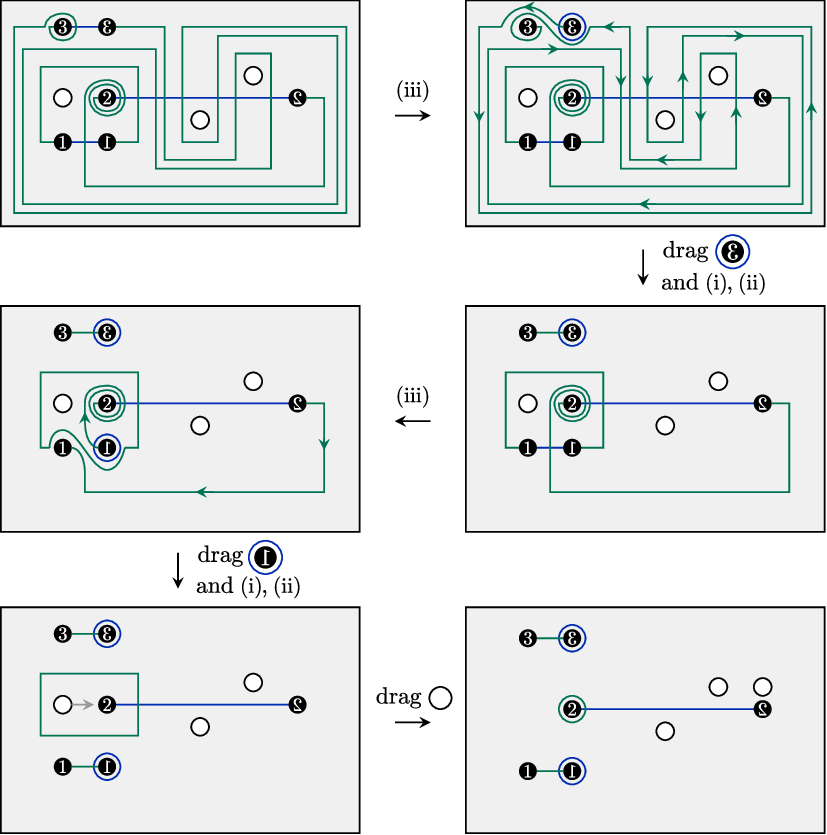}
\caption{Diffeomorphisms and handle slides that modify $(\Sigma;\beta,\gamma)$ into the standard diagram.}
\label{fig:td-W^-(0,k)-Sigma-bc}
\end{figure}

\begin{proof}[Proof of Theorem~\ref{main:AKMazur}] 
 We prove that, for any integers $l$ and $k$, each of the $4$-manifolds $W^\pm(l,k)$ is induced by the $(3,3;0,4)$-relative trisection diagram $\calD_n$ for some $n$.
 To prove this, we use an algorithm that produces a handlebody diagram from a relative trisection diagram.
This algorithm was introduced by Kim and Miller~\cite{KimMil20}.
%
The procedure for applying the algorithm to the relative trisection diagram $\calD_n$ is shown in Figure~\ref{fig:td-D_n-KMalg}.
\begin{figure}[!tbp]
\centering
\includegraphics[scale=0.75]{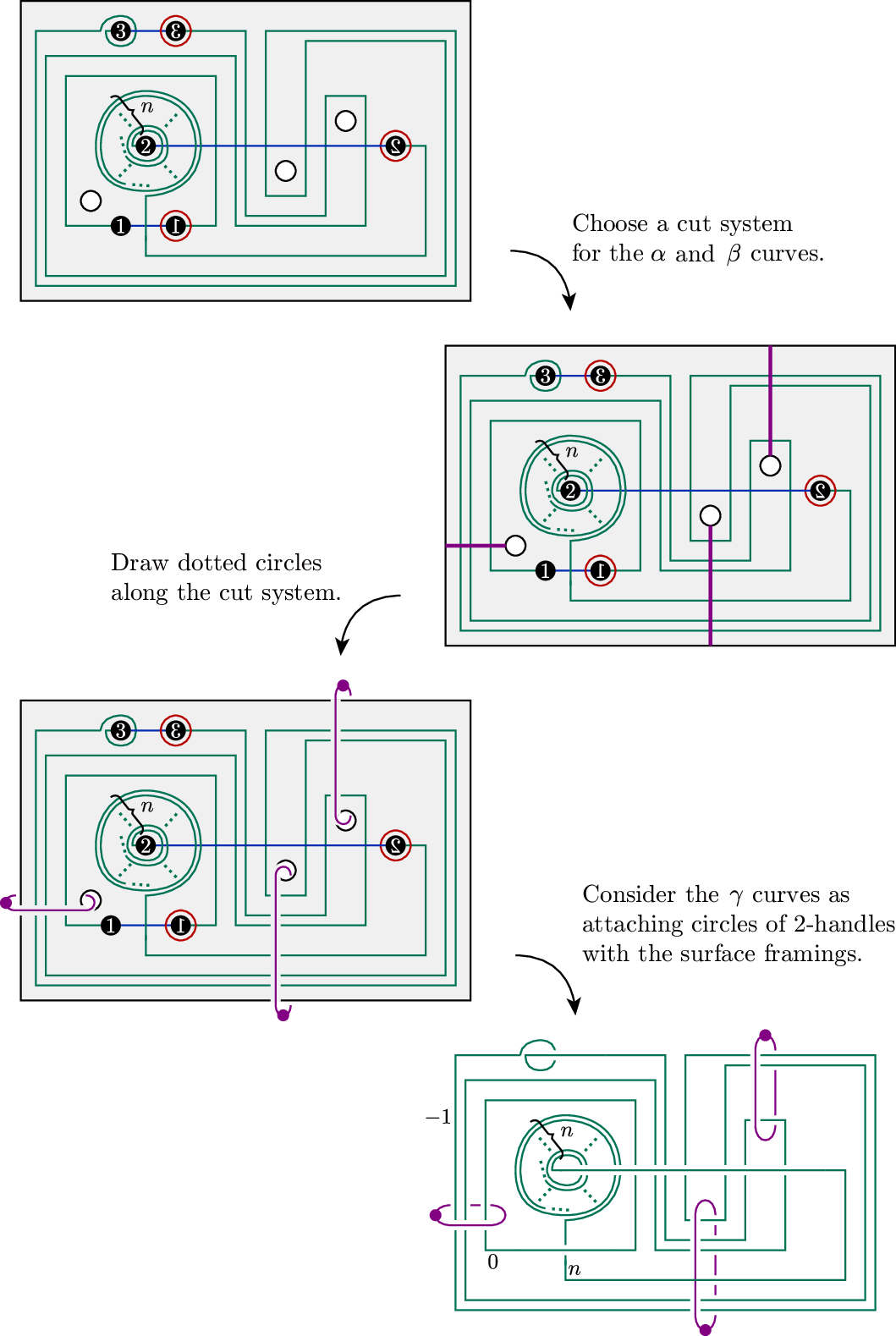}
\caption{An algorithm that produces a handlebody diagram from a relative trisection diagram.}
\label{fig:td-D_n-KMalg}
\end{figure}
(In this case, since $k=2p+b-1$, there are no 3-handles.)
First, we standardize the $\alpha$ and $\beta$ curves using handle slides and diffeomorphisms of $\Sigma$.
The first diagram $\calD_n$ in Figure~\ref{fig:td-D_n-KMalg} is already in this form.
Next, we draw $2p+b-1$ pairwise disjoint, properly embedded simple arcs on $\Sigma$ that are disjoint from $\alpha\cup \beta$ and cut $\Sigma_{\alpha}$ and $\Sigma_{\beta}$ into a disk. 
Such a family of arcs is called a \textit{cut system} for the $\alpha$ and $\beta$ curves.
Here, each of $\Sigma_\alpha$ and $\Sigma_\beta$ denotes the result of surgering $\Sigma$ along $\alpha$ and $\beta$, respectively.
%
%
%
%
Then, we draw dotted circles by doubling the arcs of the cut system (see the third diagram in Figure~\ref{fig:td-D_n-KMalg}).
%
%
Finally, we consider the $\gamma$ curves as attaching circles of $2$-handles. The framing of each attaching circle agrees with the surface framing.
By deleting the surface $\Sigma$ and the curves of $\alpha$ and $\beta$, we obtain the last diagram in Figure~\ref{fig:td-D_n-KMalg}.
%
%
%
%
%
The handle moves in Figure~\ref{fig:Kcalc-W^-(0,k)} shows that the $4$-manifold obtained from $\calD_n$ is diffeomorphic to $\overline{W^+(0,-n+1)}$.
By the second diffeomorphism of Proposition~\ref{prop:AK}, we see $\overline{W^+(0,-n+1)} \cong W^-(0,n+2)$.
Hence, $\calD_n$ is a relative trisection diagram of $W^-(0,n+2)$.
In other words, for any integer $n$, the $4$-manifold $W^-(0,n)$ is represented by the $(3,3;0,4)$-relative trisection diagram $\calD_{n-2}$.
\begin{figure}[!tbp]
\centering
\includegraphics[scale=0.9]{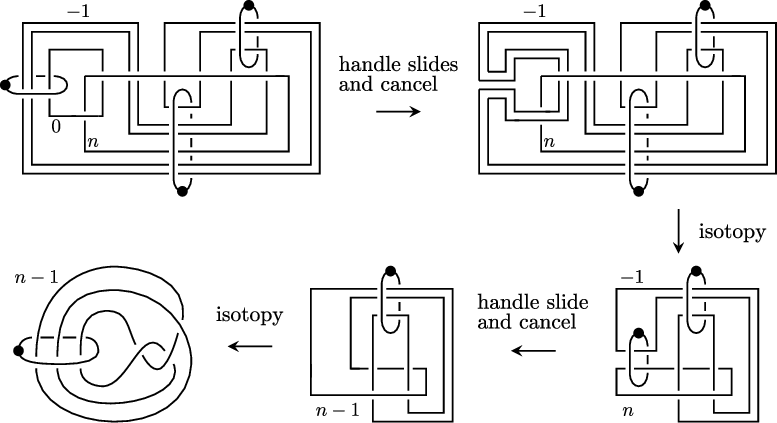}
\caption{Handle moves of the $4$-manifold induced by $\calD_n$.}
\label{fig:Kcalc-W^-(0,k)}
\end{figure}




We now prove that $W^-(l,k)\cong W^-(0,l+k)$ and $W^+(l,k)\cong \overline{W^-(0,-l-k+3)}$ hold for any integers $l$ and $k$.
%
%
By the first diffeomorphism of Proposition~\ref{prop:AK}, it follows that $W^\pm(l,k)\cong W^\pm(l+m,k-m)$ for any integer $m$. 
Thus, $W^\pm(0,l+k)\cong W^\pm(0+l,(l+k)-l)\cong W^\pm(l,k)$.
By the second diffeomorphism of Proposition~\ref{prop:AK}, we obtain $W^+(l',k')\cong \overline{W^-(-l',-k'+3)}$ for any integers $l'$ and $k'$.
Hence, $W^+(l,k)\cong W^+(0,l+k) \cong \overline{W^-(0,-l-k+3)}$ holds.
%
%
Therefore, for any integers $l$ and $k$, each of $W^\pm(l,k)$ is represented by the $(3,3;0,4)$-relative trisection diagram $\calD_{n}$ for some integer $n$.
%

 Next, we give a lower bound for the trisection genus of $W^-(l,k)$.
 By Corollary~4.2 of \cite{Tak22a}, if the boundary of a $4$-manifold $X$ is hyperbolic, then $g(X)\geq\chi(X)+2$.
 Comparing the complete list of integral exceptional surgeries along the Mazur link given by Yamada~\cite[Theorem~1.1]{Yam18}, we see that $\partial{W^-(0,k)}$ is hyperbolic if $k\notin\{2,3,4,5\}$.
 Since $W^-(0,k)$ is contractible, the inequality $g(W^-(0,k))\geq3$ holds for such $k$.
 Thus, if $l+k\notin\{2,3,4,5\}$, then $g(W^-(l,k))=g(W^+(-l,-k+3))=3$.
\end{proof}

\begin{rem}
 According to the list of \cite[Theorem~1.1]{Yam18}, the boundary of $W^-(l,k)$ is not hyperbolic if $l+k\in\{2,3,4,5\}$.
 In these cases, it holds that $g(W^-(l,k))\geq2$ by using the lower bound given in \cite[Corollary~4.2]{Tak22a}.
 That is, if $l+k\in\{2,3,4,5\}$, then $W^-(l,k)$ could admit a genus-$2$ relative trisection.
\end{rem}

 By the property that a relative trisection induces an open book decomposition on the boundary, we obtain the following corollary:

\begin{cor}
 For any integers $l$ and $k$, each of the homology $3$-sphere $\partial{W^\pm(l,k)}$ admits an open book decomposition with pages of a $4$-punctured $2$-sphere.
 Moreover, if $l+k\notin\{2,3,4,5\}$, then the minimal number of binding components of planar open book decompositions on $\partial{W^-(l,k)}$ is $4$.
\end{cor}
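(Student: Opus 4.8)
The plan is to read off both statements from the $(3,3;0,4)$-relative trisections constructed in Theorem~\ref{main:AKMazur}, and then to pin down the lower bound by classifying planar open books with few binding components. For the existence claim, recall that by Lemma~\ref{lem:3304td} and the proof of Theorem~\ref{main:AKMazur}, each $W^\pm(l,k)$ carries a $(3,3;0,4)$-relative trisection. One of the defining properties of a $(g,k;p,b)$-relative trisection, listed in Section~\ref{sec:prel}, is that it induces an open book decomposition of the boundary whose pages have genus $p$ with $b$ boundary components. With $p=0$ and $b=4$ the pages are $4$-punctured $2$-spheres, which proves the first assertion for every $\partial W^\pm(l,k)$ and, in particular, shows that the minimal number of binding components of a planar open book on $\partial W^-(l,k)$ is at most $4$.

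For the lower bound I would show that, when $l+k\notin\{2,3,4,5\}$, no planar open book on $\partial W^-(l,k)$ has three or fewer binding components. From the proof of Theorem~\ref{main:AKMazur} we have $W^-(l,k)\cong W^-(0,l+k)$, and $\partial W^-(0,l+k)$ is hyperbolic exactly under the hypothesis $l+k\notin\{2,3,4,5\}$. The crux is that a planar open book with $b\leq 3$ binding components can only produce a non-hyperbolic $3$-manifold, because the monodromy is forced to be a product of boundary-parallel Dehn twists. Concretely, when $b=1$ the page is a disk and the monodromy is trivial, yielding $S^3$; when $b=2$ the page is an annulus, whose mapping class group rel boundary is generated by the core Dehn twist, so the monodromy is a power $T^n$ and the manifold is a lens space (or $S^1\times S^2$ when $n=0$); and when $b=3$ the page is a pair of pants, whose mapping class group rel boundary is the free abelian group $\ZZ^3$ generated by the three boundary Dehn twists, so the monodromy is a product of these and the resulting manifold is Seifert fibered over $S^2$. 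In each case the $3$-manifold is Seifert fibered and hence not hyperbolic, contradicting the hyperbolicity of $\partial W^-(l,k)$. Therefore $b\geq 4$, and together with the upper bound the minimal number of binding components equals $4$.

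The step demanding the most care is the case $b=3$. One must confirm that the mapping class group of the pair of pants fixing the boundary pointwise is precisely $\ZZ^3$ generated by the boundary Dehn twists---this rests on the fact that every essential simple closed curve in a pair of pants is boundary-parallel---and that an open book whose monodromy is such a product of boundary twists is genuinely Seifert fibered over $S^2$, the point being that its binding complement is a trivial circle bundle over the interior of the page, so that Dehn filling along the binding only introduces exceptional fibers over the capped-off base $S^2$. These classical facts are exactly what upgrade the easy estimate $b\geq 3$ (which already follows since a planar open book with $b$ binding components gives Heegaard genus at most $b-1$, while a hyperbolic $3$-manifold has Heegaard genus at least $2$) to the sharp bound $b\geq 4$.
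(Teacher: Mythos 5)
Your route coincides with the paper's: the existence statement and the upper bound come from the $(3,3;0,4)$-relative trisections of Theorem~\ref{main:AKMazur}, exactly as in the paper, and the lower bound comes from hyperbolicity of $\partial{W^-(0,l+k)}$ for $l+k\notin\{2,3,4,5\}$, again as in the paper. The one difference is that where the paper simply cites \cite{Ari08} --- a $3$-manifold carrying a planar open book with fewer than $4$ binding components is a Seifert fibered space \emph{or a connected sum of two lens spaces} --- you re-derive this classification by hand from the mapping class groups of the disk, annulus, and pair of pants. Your mapping class group computations are correct, but the derivation contains one genuine error in the $b=3$ case: it is not true that the resulting open book is always Seifert fibered over $S^2$, and your justification that ``Dehn filling along the binding only introduces exceptional fibers over the capped-off base'' fails precisely when the monodromy twists trivially along some boundary component. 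In that case the filling slope on the corresponding boundary torus of the pants-times-circle is the fiber slope $\{pt\}\times S^1$; a Seifert fibration never extends across a fiber-slope filling, and the result is reducible. For example, the trivial monodromy on the pair of pants yields $\#^2(S^1\times S^2)$, which is not Seifert fibered, and a single zero exponent yields a connected sum of two lens spaces. This degenerate case is exactly why the statement in \cite{Ari08} includes the alternative ``connected sum of two lens spaces,'' and why the paper's proof concludes by noting that the hyperbolic boundary is \emph{both} irreducible \emph{and} not Seifert fibered --- both exclusions are needed.

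The gap is harmless for the theorem and easy to close: when some boundary twisting exponent vanishes, the manifold is reducible (a connected sum of lens spaces, allowing $S^1\times S^2$ summands), hence still not hyperbolic, since hyperbolic $3$-manifolds are irreducible. So you should either split the $b=3$ case according to whether each boundary twist exponent is nonzero, concluding ``Seifert fibered or reducible, hence not hyperbolic'' rather than ``Seifert fibered over $S^2$,'' or simply cite \cite{Ari08} as the paper does. With that correction your argument is complete, and your aside --- that the crude Heegaard-genus estimate only gives $b\geq3$, so the pair-of-pants case is where the real content lies --- is a fair assessment of where the care is required.
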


\begin{proof}
 Since $W^-(l,k)$ admits a $(3,3;0,4)$-relative trisection, there exists an open book decomposition with pages of genus-$0$ surface with $4$ boundary components.
 If a $3$-manifold admits a planar open book decomposition of the number of binding components less than $4$, then it is a Seifert fibered space or the connected sum of two lens spaces (see e.g., \cite{Ari08}).
 Recall that when $l+k\notin\{2,3,4,5\}$, the homology sphere $\partial{W^-(l,k)}$ is hyperbolic, and thus is irreducible and not Seifert fibered.
\end{proof}

\section{Trisection genera of exotic $4$-manifolds with boundary.}

 We give genus-$4$ relative trisections for the exotic pair $(P_1,Q_1)$ of $4$-manifolds with boundary given by Figures~\ref{fig:Kd-P1} and \ref{fig:Kd-Q1}.

\begin{lem}
 Let $\calD_{P_1}$ and $\calD_{Q_1}$ be diagrams shown in Figure~\ref{fig:td-PQ-cutarc}.
 (The purple arcs are elements of cut systems.)
 Then, they are $(4,3;0,4)$-relative trisection diagrams.
\end{lem}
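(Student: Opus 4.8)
The plan is to verify the defining condition directly: for each of $\calD_{P_1}$ and $\calD_{Q_1}$ I must show that all three of the pairs $(\Sigma;\alpha,\beta)$, $(\Sigma;\beta,\gamma)$, and $(\Sigma;\gamma,\alpha)$ are diffeomorphism and handle slide equivalent to the standard diagram of type $(4,3;0,4)$ (the type-$(g,k;p,b)$ picture of Figure~\ref{fig:std-rtd} with $g=4$, $k=3$, $p=0$, $b=4$). Since $p=0$, each of the families $\alpha$, $\beta$, $\gamma$ consists of $g-p=4$ curves, and the parameters are admissible because $2p+b-1=3\leq k=3\leq g+p+b-1=7$. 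The strategy mirrors the proof of Lemma~\ref{lem:3304td}: the pair $(\Sigma;\alpha,\beta)$ is arranged to be visibly standard by construction, so the substantive work is concentrated in the two remaining pairs.

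First I would settle $(\Sigma;\alpha,\beta)$. By the way the diagrams in Figure~\ref{fig:td-PQ-cutarc} are drawn, the $\alpha$ and $\beta$ curves are placed so that $(\Sigma;\alpha,\beta)$ already coincides with an evident diffeomorphic copy of the standard $(4,3;0,4)$ diagram; the accompanying purple arcs furnish a cut system, cutting both $\Sigma_\alpha$ and $\Sigma_\beta$ into disks, and I would record this cutting condition both as a consistency check here and for later use in recovering the induced $4$-manifold via the Kim--Miller algorithm.

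Next, for $(\Sigma;\gamma,\alpha)$ and $(\Sigma;\beta,\gamma)$ I would apply the local moves (i)--(iv) of Figure~\ref{fig:modif1234}, exactly as in the genus-$3$ case. Concretely, I expect to drag the black disks (the cylinder-gluing markers) along the marked $\gamma$ curves, using operation~(i) to ignore superfluous rotations of a $\gamma$ curve around a disk, operation~(ii) to push a disk past the $\beta$ curves by a handle slide, and operations~(iii)--(iv) to normalize the final configuration into the standard one. Because $P_1$ and $Q_1$ are related by the cork twist (exchanging the dot and the zero in Figures~\ref{fig:Kd-P1} and~\ref{fig:Kd-Q1}), their diagrams should differ only in the way the relevant $\gamma$ curve threads the extra handle, so the same catalogue of moves applies to both, with the cork twist reflected in a single altered crossing pattern.

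The main obstacle will be bookkeeping rather than conceptual: with $g=4$ there is one more handle than in Lemma~\ref{lem:3304td}, so the $\gamma$ curves wind through an additional $1$-handle and an extra black-disk pair must be tracked as it is dragged past the $\beta$ curves. I expect the delicate point to be confirming that when this extra disk is pushed through (via operation~(ii)) no uncontrolled slide between distinct families is secretly required, i.e.\ that every handle slide stays within a single family $\alpha$, $\beta$, or $\gamma$, as the definition demands. Verifying this for both $\calD_{P_1}$ and $\calD_{Q_1}$, and checking that each reduction terminates literally at the standard diagram of type $(4,3;0,4)$, completes the proof.
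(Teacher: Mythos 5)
Your proposal takes essentially the same route as the paper, which likewise reduces to the method of Lemma~\ref{lem:3304td}: verify that $(\Sigma;\alpha,\beta)$, $(\Sigma;\beta,\gamma)$, and $(\Sigma;\gamma,\alpha)$ are diffeomorphism and handle slide equivalent to the standard $(4,3;0,4)$ diagram, with the easy pairs (there, both $(\Sigma;\alpha,\beta)$ and $(\Sigma;\gamma,\alpha)$) dispatched by inspection and the $(\Sigma;\beta,\gamma)$ case carried out by dragging the black disks along the $\gamma$ curves via the operations (i)--(iv), done separately for $\calD_{P_1}$ and $\calD_{Q_1}$. One minor correction to an aside: by Remark~\ref{rem:2Acork} the two diagrams arise from two \emph{distinct} relative trisection diagrams of the Akbulut cork, so they are not related merely by a single altered crossing as you suggest, but since your plan checks each diagram independently this does not affect the argument.
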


\begin{figure}[!tbp]
\centering
\includegraphics[scale=0.87]{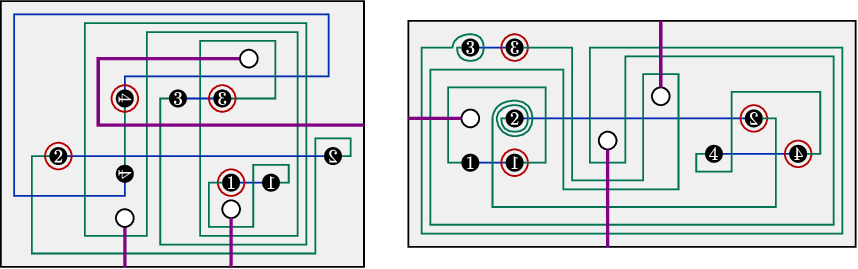}
\caption{Two $(4,3;0,4)$-relative trisection diagrams. The left diagram is $\calD_{P_1}$, and the right one is $\calD_{Q_1}$. The purple arcs are elements of cut systems.}
\label{fig:td-PQ-cutarc}
\end{figure}

\begin{proof}
 The proof is the same as that of Lemma~\ref{lem:3304td}.
 We verify that $(\Sigma,\alpha,\beta)$, $(\Sigma,\beta,\gamma)$, and $(\Sigma,\gamma,\alpha)$ of $\calD_{P_1}$ and $\calD_{Q_1}$ are diffeomorphism and handle slide equivalent to the standard diagram of type $(4,3;0,4)$.
 We omit proofs of easy parts $(\Sigma,\alpha,\beta)$ and $(\Sigma,\gamma,\alpha)$.
 The proof for the case $(\Sigma;\beta,\gamma)$ of $\calD_{P_1}$ and $\calD_{Q_1}$ are shown in Figures~\ref{fig:td-P-Sigma-bc} and \ref{fig:td-Q-Sigma-bc}, respectively.
\end{proof}

\begin{figure}[!tbp]
\centering
\includegraphics[scale=0.85]{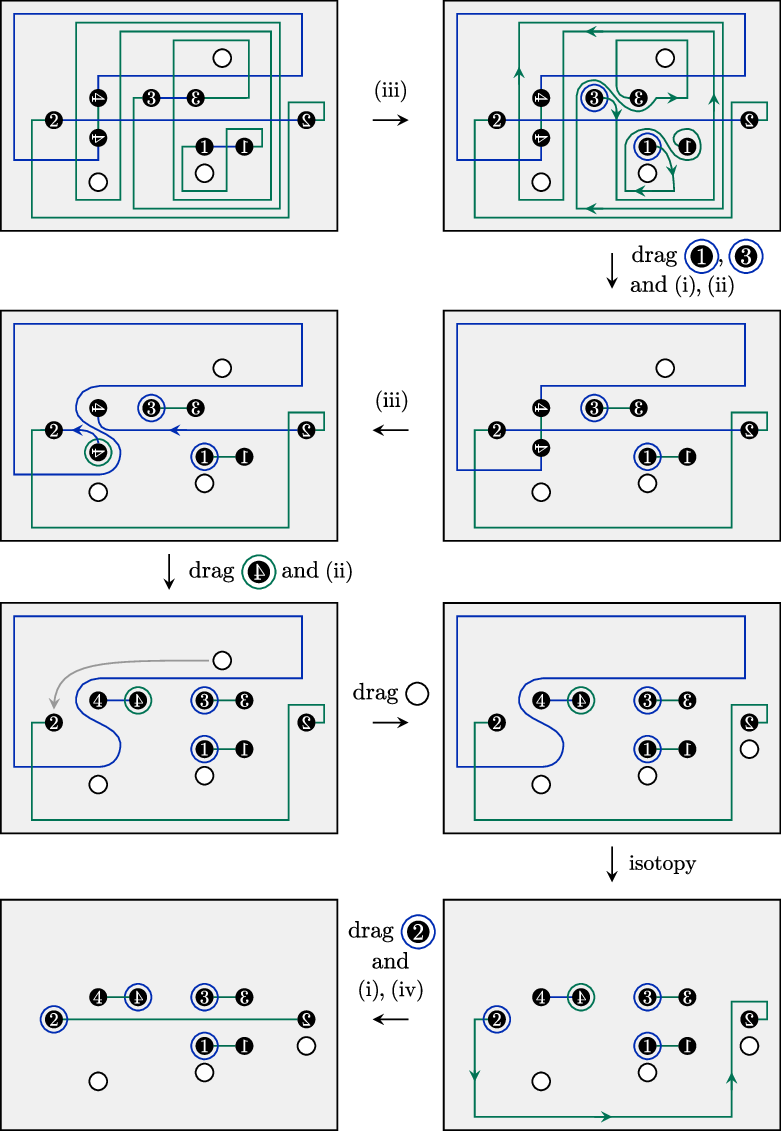}
\caption{Diffeomorphisms and handle slides that modify $(\Sigma;\beta,\gamma)$ of $\calD_{P_1}$ into the standard diagram.}
\label{fig:td-P-Sigma-bc}
\end{figure}
\begin{figure}[!tbp]
\centering
\includegraphics[scale=0.85]{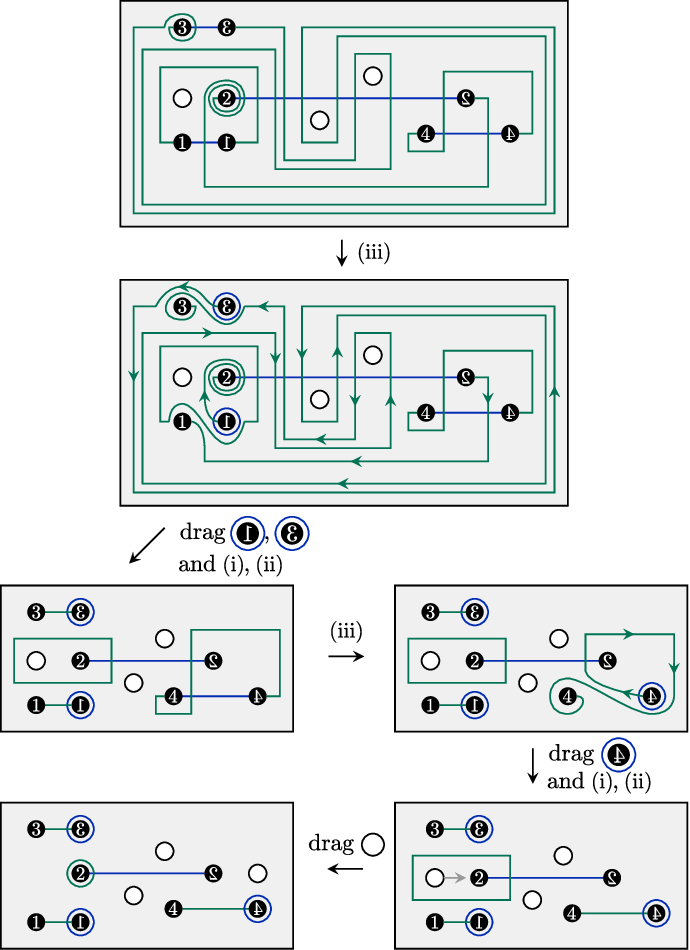}
\caption{Diffeomorphisms and handle slides that modify $(\Sigma;\beta,\gamma)$ of $\calD_{Q_1}$ into the standard diagram.}
\label{fig:td-Q-Sigma-bc}
\end{figure}

\begin{proof}[Proof of Theorem~\ref{main:g=4exotic}]
We show that the $4$-manifolds $P_1$ and $Q_1$ are induced by the $(4,3;0,4)$-relative trisection diagrams $\calD_{P_1}$ and $\calD_
{Q_1}$, respectively.
%
%
The relative trisection diagram $\calD_{P_1}$ induces the first handlebody diagram of Figure~\ref{fig:Kcalc-P} by using the algorithm described in Figure~\ref{fig:td-D_n-KMalg}.
%
%
%
The handle moves in Figure~\ref{fig:Kcalc-P} shows that the $4$-manifold induced by $\calD_{P_1}$ is diffeomorphic to $P_1$.
%
The operation $(*)$ is obtained by the procedure shown in Figure~\ref{fig:AKmove}. This operation was introduced by Akbulut and Kirby~\cite{AkbKir79}.
For the details of the final isotopy in Figure~\ref{fig:Kcalc-P}, see Figure~\ref{fig:Kcalc-isotopy}.
%
%
%
The relative trisection diagram $\calD_{Q_1}$ induces the handlebody diagrams of Figure~\ref{fig:Kd-Q2}.
In addition, we see that the $4$-manifold given by Figure~\ref{fig:Kd-Q2} is diffeomorphic to $Q_1$ by performing the same handle moves of Figure~\ref{fig:Kcalc-W^-(0,k)}.
 Hence, both $P_1$ and $Q_1$ admit $(4,3;0,4)$-relative trisections.

 Next, we give lower bounds of the trisection genera of $P_1$ and $Q_1$.
 For the handlebody diagram of them shown in Figure~\ref{fig:Kd-Q1}, by using the slam-dunk move, we see that $\partial{P_1}$ and $\partial{Q_1}$ are homeomorphic to $\partial{W^-(0,1)}$, which is hyperbolic (see \cite[Theorem 1.1]{Yam18}).
 By using the inequality of \cite[Corollary~4.2]{Tak22a}, we conclude that $g(P_1)=g(Q_1)\geq\chi(P_1)+2=4$.
\end{proof}

\begin{figure}[!tbp]
\centering
\includegraphics[scale=0.7]{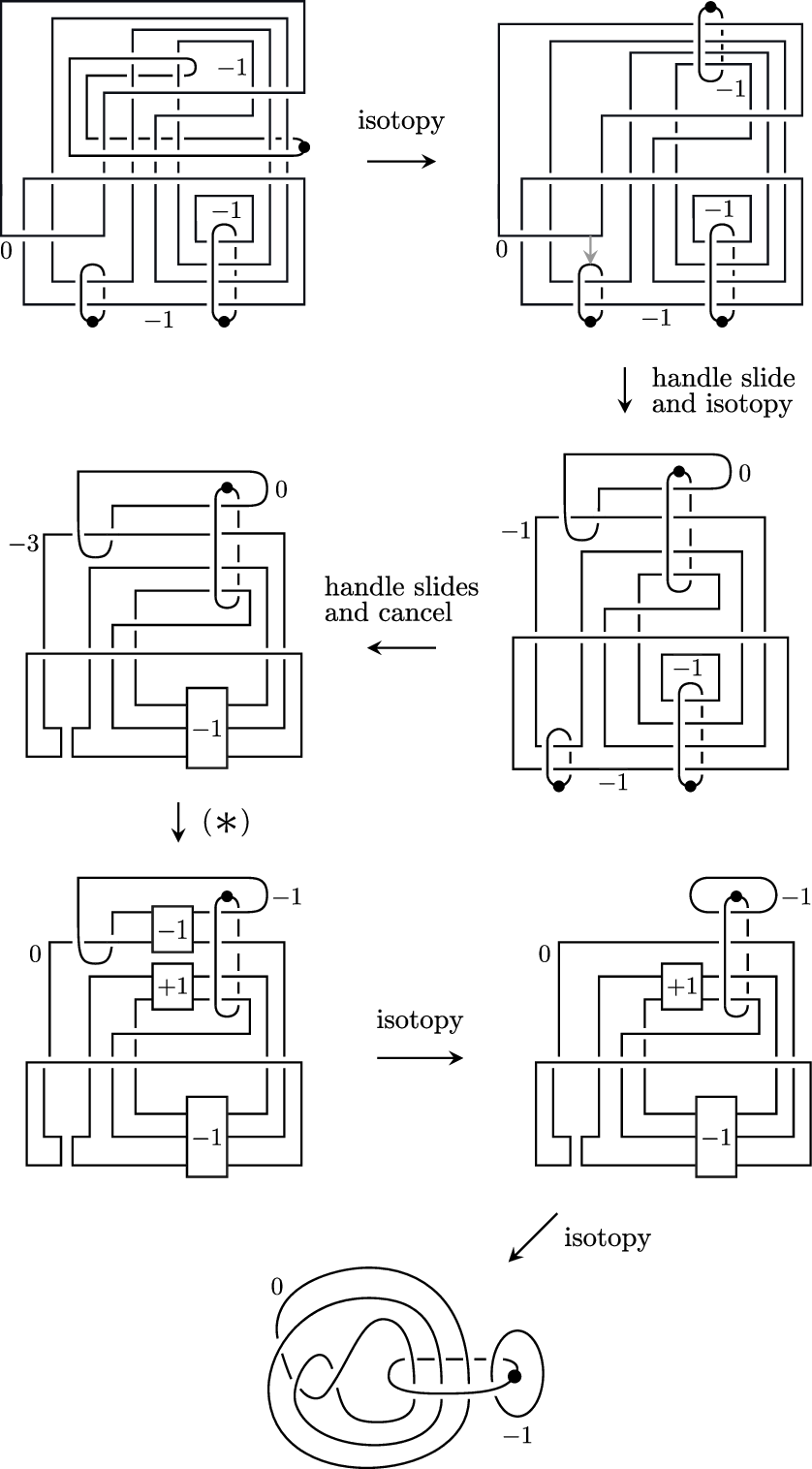}
\caption{Handle moves of the $4$-manifold induced by $\calD_{P_1}$.}
\label{fig:Kcalc-P}
\end{figure}
\begin{figure}[!tbp]
\centering
\includegraphics[scale=0.7]{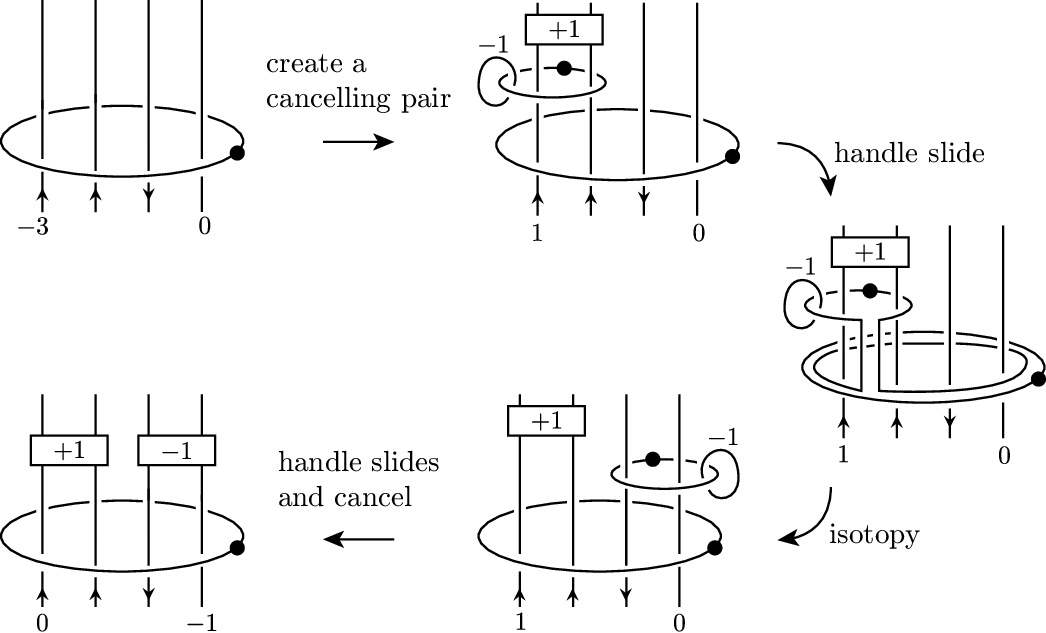}
\caption{The detailed procedure of the operation $(*)$.}
\label{fig:AKmove}
\end{figure}
\begin{figure}[!tbp]
\centering
\includegraphics[scale=0.7]{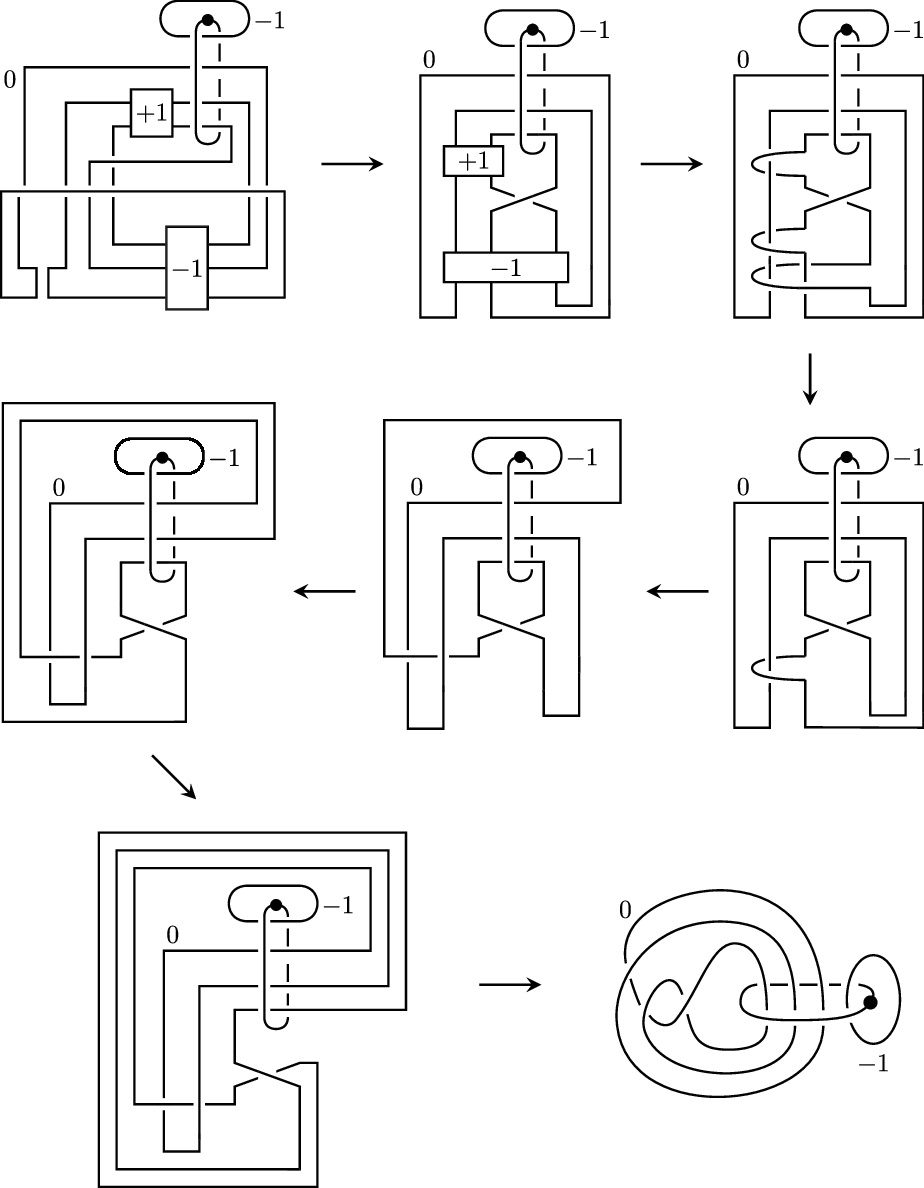}
\caption{The detailed procedure of the final isotopy in Figure~\ref{fig:Kcalc-P}}
\label{fig:Kcalc-isotopy}
\end{figure}
\begin{figure}[!tbp]
\centering
\includegraphics[scale=0.9]{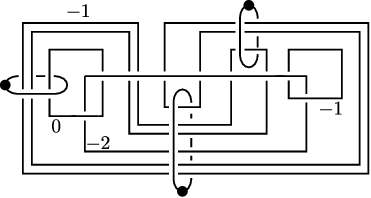}
\caption{The handlebody diagram induced by $\calD_{Q_1}$.}
\label{fig:Kd-Q2}
\end{figure}

\begin{rem}\label{rem:2Acork}
 As we saw in the proof of Theorem~\ref{main:AKMazur}, the $(3,3;0,4)$-relative trisection diagram $\calD_n$ gives the contractible $4$-manifold $W^-(0,n+2)$.
 Thus, the Akbulut cork $W^-(0,0)$ is induced by $\calD_{-2}$ shown in the right of Figure~\ref{fig:td-2Acork}.
 We see that the $(4,3;0,4)$-relative trisection diagram $\calD_{Q_1}$ is obtained by modifying a part of $\calD_{-2}$.
 On the other hand, in \cite{Tak22a}, the author gave another $(3,3;0,4)$-relative trisection diagram of $W^-(0,0)$ (see the left of Figure~\ref{fig:td-2Acork}), which is very similar to $\calD_{P_1}$.
 That is, our relative trisections of the exotic pair $(P_1, Q_1)$ are based on two distinct relative trisection diagrams of the Akbulut cork.
 (Compare Figure~\ref{fig:td-PQ-cutarc} with Figure~\ref{fig:td-2Acork}.)
\end{rem}

\begin{figure}[!tbp]
\centering
\includegraphics[scale=0.9]{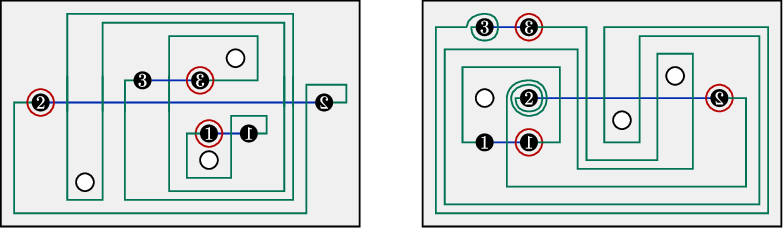}
\caption{Two $(3,3,0,4)$-relative trisection diagrams of the Akbulut cork $W^-(0,0)$. The left diagram is given in \cite[Figure~31]{Tak22a}, and the right one is $\calD_{-2}$.}
\label{fig:td-2Acork}
\end{figure}

\begin{rem}
 In \cite[Theorem~1.6]{Tak22a}, the author constructed small genus relative trisections for the exotic pair $(P_2, Q_2)$, and showed that $g(Q_2)=4$ and $g(P_2)=4$ or $5$.
 However, we have not yet been able to find a genus-$4$ relative trisection of $P_2$.
 If we have $g(P_2)=5$, then it follows that Conjecture~\ref{conj:exotic-tg} is false, namely trisection genus of $4$-manifolds with boundary is not a homeomorphism invariant.
\end{rem}

 Looking at the handlebody diagrams of Figures~\ref{fig:Kd-P1} and \ref{fig:Kd-Q1},  it seems that $(P_1,Q_1)$ is very simple exotic pairs of $4$-manifolds with boundary. Thus, we raise the following natural question:

\begin{qst}
 Does there exist an exotic pair $(X,Y)$ of $4$-manifolds such that $g(X)=g(Y)<4$?
\end{qst}

\subsection*{Acknowledgements}
 The author would like to express his adviser Kouichi Yasui for helpful comments and encouragement.
 The author also thanks Yohei Wakamaki for many discussions.
 The author was partially supported by JST SPRING, Grant Number JPMJSP2138.
%

\end{document}